\theoremstyle{plain}
\newtheorem{theorem}{Theorem}
\newtheorem{proposition}{Proposition}
\newtheorem{lemma}[proposition]{Lemma}
\theoremstyle{definition}
\newtheorem{definition}[proposition]{Definition}
\theoremstyle{remark}{}
\newtheorem{remark}{Remark}
\newcommand{\secref}[1]{Section~\ref{#1}}
\newcommand{\subsecref}[1]{Subsection~\ref{#1}}
\newcommand{\thmref}[1]{Theorem~\ref{#1}}
\newcommand{\propref}[1]{Proposition~\ref{#1}}
\newcommand{\lemref}[1]{Lemma~\ref{#1}}
\newcommand{\defref}[1]{Definition~\ref{#1}}
\def\R{\mathbb R}
\def\cC{\mathcal C}
\def\cat{\rm cat}
\newcommand{\cof}{\rightarrowtail }
\def\N{{\mathbb{N}}}
\def\R{{\mathbb{R}}}
\def\cat{{\rm{cat}\hskip1pt}}
\def\del{\partial}
\def\Top{{\bf Top}}
\def\id{{\rm id}}
\def\pr{{\rm pr}}
\def\ev{{\rm ev}}
\begin{document}
\title{Simplicial resolutions and Ganea fibrations}
\date{\today}
\author[T. Kahl]{Thomas Kahl}
\address{Centro de Matem{\'a}tica\\
        Universidade do Minho\\
        Campus de Gualtar\\
         4710-057 Braga\\
         Portugal}
\email{kahl@math.uminho.pt}
\author[H. Scheerer]{Hans Scheerer}
\address{Mathematisches Institut\\
        Freie Universit\"at Berlin\\
        Arnimallee 2--6\\
          D--14195  Berlin\\
         Germany}
\email{scheerer@mi.fu-berlin.de}
\author[D. Tanr\'e]{Daniel Tanr\'e}
\address{D\'epartement de Mathematiques\\
         UMR 8524\\
         Universit\'e de Lille~1\\
         59655 Villeneuve d'Ascq Cedex\\
         France}
\email{Daniel.Tanre@agat.univ-lille1.fr}
\author[L. Vandembroucq]{Lucile Vandembroucq}
\address{Centro de Matem{\'a}tica\\
        Universidade do Minho\\
        Campus de Gualtar\\
         4710-057 Braga\\
         Portugal}
\email{lucile@math.uminho.pt}
\begin{abstract}
In this work, we compare the two approximations of a path-connected space $X$, by
the Ganea spaces $G_n(X)$ and by the realizations $\|\Lambda_\bullet X\|_{n}$ of the truncated simplicial resolutions emerging from the loop-suspen\-sion co\-triple $\Sigma\Omega$. 
For a simply connected space $X$, we construct maps\\ $\|\Lambda_\bullet X\|_{n-1}\to G_n(X)\to \|\Lambda_\bullet X\|_{n}$ over $X$, up to homotopy. 
In the case $n=2$,   we prove the existence of a map
$G_2(X)\to\|\Lambda_\bullet X\|_{1}$ over $X$ (up to homotopy) and conjecture that this map exists for any $n$. 
\end{abstract}
\maketitle
We use  the category $\Top$ of well pointed compactly generated spaces having the homotopy type of CW-complexes.  We denote by $\Omega$ and $\Sigma$ the classical loop space and (reduced) suspension constructions on $\Top $.

\medskip
Let $X\in \Top$. First we recall the construction of the Ganea fibrations $G_n(X)\to X$ where $G_n(X)$ has the same homotopy type as the $n$-th stage, $B_n\Omega X$,  of the construction of the  classifying space of $\Omega X$:
\begin{enumerate}
\item  the first Ganea fibration, $p_1\colon G_1(X)\rightarrow X$,  is the associated fibration to the evaluation map
$\ev_X\colon \Sigma\Omega X\rightarrow X$;
\item given the $n${th}-fibration $p_n\colon G_n(X)\rightarrow X$, let  $F_n(X)$ be its homotopy fiber and
let  $G_n(X)\cup {\cC}(F_n(X))$ be the mapping cone of the inclusion $F_n(X)\rightarrow G_n(X)$. We define now a map ${p'}_{n+1}\colon G_n(X)\cup {\cC}(F_n(X))\rightarrow X$ as
$p_n$ on $G_n(X)$ and that sends the (reduced) cone $\cC(F_n(X))$ on the base point. The
$(n+1)$-{{st}}-fibration of Ganea, $p_{n+1}\colon G_{n+1}(X)\rightarrow X$,
is the fibration associated to ${p'}_{n+1}$.
\item Denote by $G_\infty(X)$ the direct limit of the canonical maps $G_n(X)\to G_{n+1}(X)$ and by $p_\infty\colon G_\infty(X)\to X$ the map induced by the $p_n$'s.
\end{enumerate}

\smallskip
From a classical  theorem of Ganea \cite{Gan67a}, one knows that the fiber of $p_n$ has the homotopy type of an $(n+1)$-fold reduced join of $\Omega X$ with itself. Therefore the maps $p_n$ are higher and higher connected when the integer $n$ grows. As a consequence, if $X$ is path-connected, the map 
$p_\infty\colon G_\infty(X)\to X$ is a homotopy equivalence and  the total spaces $G_n(X)$ constitute approximations of the space $X$.

\medskip
The previous construction starts with the couple of adjoint functors  $\Omega$ and $\Sigma$. 
From them, we can construct a \emph{simplicial space} $\Lambda_{\bullet} X$, defined by $\Lambda_nX=(\Sigma\Omega )^{n+1}X$ and augmented by 
$d_0=\ev_X\colon \Sigma\Omega X\rightarrow X$. 
Forgetting the degeneracies, we have a \emph{facial space} (also called {restricted simplicial space} in \cite[3.13]{DD77}).
Denote by $\|\Lambda_{\bullet} X\|$ the realization  of this facial space (see \cite{Seg74} or \secref{sec:facial}). 
An adaptation of the proof of Stover (see \cite[Proposition 3.5]{Sto90}) shows that the augmentation $d_0$ induces a map
$\|\Lambda_\bullet X\|\to X$ which is a homotopy equivalence.
If we consider the successive stages of the realization of the facial space $\Lambda_\bullet X$,   we get maps 
$\|\Lambda_\bullet X\|_n\to X$ which constitute a second sequence of approximations of the space $X$. In this work, we study the relationship between these two sequences of approximations and prove the following results.

\begin{theorem}\label{thm:main}
Let $X\in \Top$ be a simply connected space. Then there is  a homotopy commutative diagram
$$\xymatrix{
\|\Lambda_\bullet X\|_{n-1}\ar[r]\ar[ddr]&
G_n(X)\ar[r]\ar[dd]_{p_n}&
\|\Lambda_\bullet X\|_{n}\ar[ldd]\\
&&\\
&X&
}$$
\end{theorem}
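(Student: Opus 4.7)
The plan is to argue by induction on $n$, using in parallel the skeletal filtration of $\|\Lambda_\bullet X\|$ and the recursive definition of Ganea's fibrations. The $n$-th skeleton of the realization sits in a pushout
\begin{equation*}
\|\Lambda_\bullet X\|_n \;\simeq\; \|\Lambda_\bullet X\|_{n-1}\cup_{\Lambda_n X\times\partial\Delta^n}\bigl(\Lambda_n X\times\Delta^n\bigr),
\end{equation*}
so to extend a map out of $\|\Lambda_\bullet X\|_{n-1}$ one must null-homotope the attaching of the $n$-cells. Dually, $G_{n+1}(X)$ is the fibration replacement of $G_n(X)\cup\cC(F_n(X))$, where by Ganea's theorem $F_n(X)\simeq(\Omega X)^{*(n+1)}$ is $n$-connected for $X$ simply connected, so to extend a map out of $G_n(X)$ one must null-homotope the fiber $F_n(X)$. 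All extensions must of course be performed over $X$.

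The base case $n=1$ is essentially tautological for the first map, since $\|\Lambda_\bullet X\|_0=\Sigma\Omega X$ and the canonical map into its associated fibration is $\Sigma\Omega X\to G_1(X)$. For the reverse direction $G_1(X)\to\|\Lambda_\bullet X\|_1$, I would exploit the fact that $\|\Lambda_\bullet X\|_1$ is the double mapping cylinder of the two face maps $d_0=\ev_{\Sigma\Omega X}$ and $d_1=\Sigma\Omega\,\ev_X$ out of $(\Sigma\Omega)^2 X$; the cylinder direction provides explicitly, inside $\|\Lambda_\bullet X\|_1$, the path data that characterizes the fibration replacement of $\ev_X$, and this is enough to factor $G_1(X)\to X$ through $\|\Lambda_\bullet X\|_1$ up to homotopy over~$X$.

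For the inductive step, suppose we have the diagram at level $n-1$ and wish to build $\alpha_n\colon\|\Lambda_\bullet X\|_{n-1}\to G_n(X)$ and $\beta_n\colon G_n(X)\to\|\Lambda_\bullet X\|_n$, both over $X$. To build $\alpha_n$ I would start from the composite $\|\Lambda_\bullet X\|_{n-2}\to G_{n-1}(X)\hookrightarrow G_n(X)$ given by induction and extend across the attaching of $\Lambda_{n-1}X\times\Delta^{n-1}$. The obstruction is a map from $\Lambda_{n-1}X\times\partial\Delta^{n-1}$ which, by the cotriple identities relating the face maps $d_i$ to the iterated counits, factors through the homotopy fiber $F_{n-1}(X)\simeq(\Omega X)^{*n}$ --- and this fiber is precisely what is coned off in passing from $G_{n-1}(X)$ to $G_n(X)$. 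To build $\beta_n$ I would extend the composite $G_{n-1}(X)\to\|\Lambda_\bullet X\|_{n-1}\hookrightarrow\|\Lambda_\bullet X\|_n$ across $\cC(F_{n-1}(X))$; the obstruction is now the class of $F_{n-1}(X)\hookrightarrow\|\Lambda_\bullet X\|_{n-1}$ inside $\|\Lambda_\bullet X\|_n$, and the freshly attached cells $\Lambda_n X\times\Delta^n$ are exactly what is needed to trivialize it, again by the cotriple identities.

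The technical heart of the proof --- and what I expect to be the main obstacle --- is the explicit identification of these null-homotopies: matching the combinatorics of the simplices $\Delta^n$ governing the cellular structure of $\|\Lambda_\bullet X\|_n$ with the iterated mapping-cone structure of $G_n(X)$, while keeping everything over $X$. Once this matching is set up one may invoke standard obstruction theory, using that $(\Omega X)^{*n}$ is $(n-1)$-connected, to carry out the extensions up to homotopy; but the bookkeeping required to make the attaching maps, face operators, and fiber inclusions line up coherently over~$X$ is where most of the genuine work should concentrate, and it is also what presumably forces the slight discrepancy between the source indices $n-1$ and target indices $n$ in the two horizontal maps of the diagram.
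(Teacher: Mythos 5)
For the map $\|\Lambda_\bullet X\|_{n-1}\to G_n(X)$ your plan matches the paper's \thmref{thm:easyway}: push the attaching map $\Lambda_{n-1}X\rtimes\partial\Delta^{n-1}\to\|\Lambda_\bullet X\|_{n-2}\to G_{n-1}(X)$ into the fiber $F_{n-1}(X)$ (using that the composite is null over $X$) and then extend over the cone that forms $G_n(X)$. That part is fine and does not need simple connectivity.

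For the map $G_n(X)\to\|\Lambda_\bullet X\|_n$ your proposal diverges from the paper, and this is where the gap is. You want to extend $G_{n-1}(X)\to\|\Lambda_\bullet X\|_{n-1}\hookrightarrow\|\Lambda_\bullet X\|_n$ across $\cC(F_{n-1}(X))$, which requires that the composite $F_{n-1}(X)\to G_{n-1}(X)\to\|\Lambda_\bullet X\|_n$ be null-homotopic over $X$. You assert that ``the freshly attached cells $\Lambda_n X\times\Delta^n$ are exactly what is needed to trivialize it, again by the cotriple identities,'' but you never produce the null-homotopy, and there is no reason this should drop out formally: the cells of $\|\Lambda_\bullet X\|_n$ are attached along the face maps of $\Lambda_\bullet X$, whereas the map you need to kill comes from the inductively constructed $\beta_{n-1}$, about which you know nothing beyond the fact that it lies over $X$. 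Connectivity of $(\Omega X)^{*n}$ does not by itself supply a fiberwise null-homotopy either, since the fiber of $\|\Lambda_\bullet X\|_n\to X$ is not contractible. This is precisely the technical heart that the paper handles by a completely different route: it replaces $G_n(X)$ by the equivalent $B_n\Omega'X$ (using simple connectivity), compares the latter to the free realization $|{\mathcal G}_\bullet(X)|_n$ of the bar-type facial space on the Moore loop space (\propref{cat}), produces an explicit contraction of the facial resolution $\Omega'\Lambda_\bullet X\to\Omega'X$ (\propref{omegacontraction}), and then applies a Libman-type theorem on bifacial spaces with row contractions (\thmref{Libman}) to obtain a free homotopy section of $\|{\mathcal G}_\bullet(\Lambda_\bullet X)|_n|^n\to|{\mathcal G}_\bullet(X)|_n$. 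That theorem, proved in the appendix via $n$-rectifiable maps, is exactly the ``matching of combinatorics'' you anticipate being the obstacle; your sketch leaves it entirely unresolved. A further sign that the naive induction cannot work as stated: if it did, a trivial re-indexing would give $G_n(X)\to\|\Lambda_\bullet X\|_{n-1}$, which the paper only establishes for $n=2$ and otherwise leaves as a conjecture.
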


The hypothesis of simply connectivity is used only for the map $G_n(X)\to \|\Lambda_\bullet X\|_n$, see \thmref{thm:easyway} and \thmref{thm:hardway}.
In the case $n=2$, the situation is better.

\begin{theorem}\label{thm:main2}
Let $X\in\Top$. Then there are   homotopy commutative triangles
$$\xymatrix{
\|\Lambda_\bullet X\|_{1}\ar@<1ex>[rr]\ar[dr]&&
G_2(X)\ar[ld]^{p_2}\ar[ll]\\
&X&
}$$
\end{theorem}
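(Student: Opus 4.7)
The plan is to realize $G_2(X)$, up to homotopy, as the mapping cone $\Sigma\Omega X\cup_{F_1}\mathcal{C}F_1$, with $F_1$ the homotopy fibre of $p_1=\ev_X$, and to build $G_2(X)\to\|\Lambda_\bullet X\|_1$ over $X$ via the universal property of the cofibre. On $\Sigma\Omega X$ the obvious choice is the canonical inclusion $\Sigma\Omega X=\Lambda_0 X\hookrightarrow\|\Lambda_\bullet X\|_1$, which lies over $X$ since $\ev_X\circ d_0=\ev_X\circ d_1$ by naturality of the counit $\ev$, so that the augmentation extends to the realization. The converse map $\|\Lambda_\bullet X\|_1\to G_2(X)$ closing the triangle is already provided by \thmref{thm:main} for $n=2$: the direction $\|\Lambda_\bullet X\|_{n-1}\to G_n(X)$ does not require simple connectivity.

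The crux is to produce a nullhomotopy of the composite $F_1\to\Sigma\Omega X\hookrightarrow\|\Lambda_\bullet X\|_1$ that is compatible, over $X$, with the collapse of $\mathcal{C}F_1$ to the base point. The strategy is to regard $\|\Lambda_\bullet X\|_1$ as the homotopy coequalizer of $d_0,d_1\colon\Sigma\Omega\Sigma\Omega X\rightrightarrows\Sigma\Omega X$ and $G_2(X)$ as the homotopy pushout of $\ast\leftarrow F_1\xrightarrow{i}\Sigma\Omega X$, and to construct a morphism of the defining spans over $X$. Such a morphism reduces to a single lift $\tilde\imath\colon F_1\to\Sigma\Omega\Sigma\Omega X$ with $d_0\tilde\imath\simeq i$ and $d_1\tilde\imath\simeq\ast$: the canonical $1$-cell in the realization then supplies the required nullhomotopy, the induced map of homotopy pushouts is the wanted map $G_2(X)\to\|\Lambda_\bullet X\|_1$, and the over-$X$ compatibility is automatic from $\ev_X\circ d_0=\ev_X\circ d_1$.

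The main obstacle is the construction of $\tilde\imath$. The naive candidate $\Sigma\eta_{\Omega X}\circ i$ fails, because the degeneracy $\Sigma\eta_{\Omega X}$ is a common section of $d_0$ and $d_1$ and so gives $d_0\tilde\imath=d_1\tilde\imath=i$, not the required asymmetric condition. I would refine it using the explicit pullback model $F_1\simeq\Sigma\Omega X\times_X PX$: a point $([s,\omega],\gamma)$ of $F_1$ carries, beyond the suspension data $[s,\omega]$, a path $\gamma$ in $X$ from the basepoint to $\omega(s)$, whose role is precisely to witness the nullhomotopy of $\ev_X\circ i$. Using $\gamma$ to perturb the naive candidate---e.g.\ by inserting $\gamma$ into the loop parameter that parameterises $\Omega\Sigma\Omega X$---one expects that $d_0\tilde\imath$ changes only up to homotopy while $d_1\tilde\imath=\Sigma\Omega\ev_X\circ\tilde\imath$ becomes nullhomotopic because $\gamma$ trivializes $\ev_X\circ i$. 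Carrying out this perturbation explicitly, verifying the two required homotopies, and checking that the resulting map lies over $X$, is the main technical content of the argument.
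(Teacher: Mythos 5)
Your proof is incomplete in precisely the place where the real work lies, and I don't think the gap can be closed by the route you sketch without essentially reinventing the paper's argument.

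Your reduction is fine as far as it goes: the map $\|\Lambda_\bullet X\|_1\to G_2(X)$ is indeed supplied by \thmref{thm:easyway} without any simple-connectivity hypothesis, $G_2(X)$ is the mapping cone of $F_1\to\Sigma\Omega X$, $\|\Lambda_\bullet X\|_1$ is (up to the $\rtimes$-vs-$\times$ issue) a homotopy coequalizer of $d_0,d_1$, and the canonical $1$-cell would give the needed nullhomotopy of $F_1\to\Sigma\Omega X\to\|\Lambda_\bullet X\|_1$ \emph{provided} you had a lift $\tilde\imath\colon F_1\to\Sigma\Omega\Sigma\Omega X$ with $d_0\tilde\imath\simeq i$ and $d_1\tilde\imath\simeq\ast$ over $X$ (with compatible homotopies). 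You also correctly diagnose that $\Sigma\eta_{\Omega X}\circ i$ fails because $\Sigma\eta_{\Omega X}$ is a common section of $d_0$ and $d_1$. But then you stop: ``perturb the naive candidate by inserting $\gamma$ into the loop parameter'' is not a construction, and your justification that $d_1\tilde\imath=\Sigma\Omega\ev_X\circ\tilde\imath$ would become nullhomotopic ``because $\gamma$ trivializes $\ev_X\circ i$'' is not sound as stated: $\gamma$ provides a nullhomotopy of a map into $X$, while you need a nullhomotopy of a map into $\Sigma\Omega X$, and the nontrivial fiber of $\ev_X$ is exactly the obstruction to promoting the former to the latter. Nothing in the sketch shows this obstruction vanishes, nor why the two homotopies $d_0\tilde\imath\simeq i$ and $d_1\tilde\imath\simeq\ast$ would be compatible and over $X$.

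The paper avoids the direct construction of such a $\tilde\imath$ altogether. It uses the join model $F_1(X)\simeq\Omega X\ast\Omega X$ together with the \emph{explicit} Hopf map $h([\alpha,t,\beta])=[\alpha^{-1}\beta,t]$, so that $\ev_X\circ h$ has the \emph{natural} contraction $H([\alpha,t,\beta],s)=\alpha^{-1}\beta(st)$. Applying the $\Sigma\Omega$-resolution along the other axis then produces a bifacial diagram whose rows all admit contractions, and \lemref{lem:petitlibman} (a pocket version of \thmref{Libman}) produces a homotopy section of $\tilde G\to G_2(X)$, where $\tilde G$ is the homotopy colimit of the framed diagram; the composite $G_2(X)\to\tilde G\to\|\Lambda_\bullet X\|_1$ is then the desired map. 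In other words, the paper trades your pointwise lift $\tilde\imath$ for a naturality/resolution argument powered by the explicit contraction of the Hopf fibration's boundary map. Your approach, if carried out, would have to reproduce precisely this kind of explicit formula; the ``perturbation'' is the theorem, not a routine verification.
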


\medskip
We conjecture the existence of maps
$\xymatrix@1{\|\Lambda_{\bullet} X\|_{n-1}\ar@<1ex>[r]&
G_n(X)\ar[l]
}$ over $X$ up to homotopy, for any $n$.

This work may also be seen as a comparison of two constructions: an iterative fiber-cofiber  process and the realization of progressive truncatures of a facial resolution. More generally, for any  cotriple, we present an adapted fiber-cofiber construction (see \defref{defi:TGanea}) and ask if the results obtained in the case of $\Sigma\Omega$ can be extended to this setting.

Finally, we observe that a variation on a theorem of Libman is essential in our argumentation, see \thmref{Libman}. A proof of this result, inspired by the methods developed by R. Vogt (see \cite{Vogt73}), is presented in an Appendix.

\smallskip
This program is carried out in Sections 1-8 below, whose headings are
self-explanatory:

\tableofcontents
\section{Facial spaces}\label{sec:facial}
%

A \emph{facial object} in a category $\bf C$ is a sequence of
objects $X_0, X_1, X_2, \dots$ together with morphisms $d_i: X_n \to
X_{n-1}$, $0 \leq i \leq n$, satisfying the \emph{facial
identities} $d_id_j = d_{j-1}d_i$ $(i < j)$.
$$\xymatrix{
X_{0} &\ar@<2pt>[l]^{d_1} \ar@<-2pt>[l]_{d_0}X_{1}
&\ar@<4pt>[l]^{d_2} \ar[l]|{d_1}\ar@<-4pt>[l]_{d_0}X_{2} &\cdots
&X_{n-1} &\ar@<4pt>[l]^-{d_n}
\ar@{}[l]|-{:}\ar@<-5pt>[l]_-{d_0}X_{n} &\ar@<4pt>[l]
\ar@{}[l]|-{:}\ar@<-5pt>[l]\cdots
 }$$
 The morphisms $d_i$ are
called \emph{face operators}. We shall use notation like
$X_{\bullet}$ to denote facial objects. With the obvious morphisms
the facial objects in $\bf C$ form a category which we denote by
$d{\bf C}$. An \emph{augmentation} of a facial object
$X_{\bullet}$ in a category $\bf C$ is a morphism $d_0: X_0 \to X$
with $d_0 \circ d_0 = d_0 \circ d_1$. The facial object
$X_{\bullet}$ together with the augmentation $d_0$ is called a \emph{facial
resolution of $X$} and is denoted by $X_{\bullet}
\stackrel{d_0}{\to} X$.\\

\subsection{Realization(s) of a facial space}

As usual, $\Delta ^n$ denotes the standard $n$-simplex of
$\R^{n+1}$ and the inclusions of faces are denoted by
$\delta^i:\Delta ^n\to \Delta^{n+1}$. We consider the point
$(0,\dots, 0,1) \in \R^{n+1}$ as the base-point of the standard
$n$-simplex $\Delta ^n$. If $X$ and $Y$ are in $\Top$, we denote by $X\rtimes Y$ the half smashed product $X\rtimes Y =X\times Y/*\times Y$.\\

\noindent A \emph{facial space} is a facial object in $\Top$. The \emph{realization} of a facial space $X_{\bullet}$ is the
direct limit
$$\|X_{\bullet}\|_{\infty} = \lim \limits_{\longrightarrow} \|X_{\bullet}\|_n$$
where the spaces $\|X_{\bullet}\|_n$ are inductively defined as follows.
Set $\|X_{\bullet}\|_0 = X_0$. Suppose we have defined
$\|X_{\bullet}\|_{n-1}$ and a map $\chi_{n-1}: X_{n-1}\rtimes \Delta^{n-1} \to \|X_{\bullet}\|_{n-1}$
($\chi_0$ is the obvious homeomorphism). Then $\|X_{\bullet}\|_{n}$ and $\chi_n$ are defined by the pushout diagram
$$\xymatrix{
X_n\rtimes \partial \Delta ^n\ar[r]^{\varphi_n}_{} \ar@{
>->}[d]^{}_{} & \|X_{\bullet}\|_{n-1}\ar@{ >->}[d]^{}_{}
\\
X_n\rtimes \Delta ^n\ar[r]^{}_{\chi_n} & \|X_{\bullet}\|_{n} }$$
where $\varphi_n$ is defined by the following requirements, for any $i\in\left\{0,1,\ldots,n\right\}$, 
$$\varphi_n\circ (X_n\rtimes \delta^i) = \chi_{n-1}\circ (d_i \rtimes \Delta ^{n-1}):
X_n\rtimes \Delta^{n-1} \to \|X_{\bullet}\|_{n-1}.$$ It is clear that $\varphi_1$ is a well-defined continuous map.
For $\varphi_n$ with $n\geq 2$, this is assured by the facial identities
$d_id_j = d_{j-1}d_i\; (i < j)$.

We  also consider another realization of the facial space
$X_{\bullet}$. The \emph{free realization} of $X_{\bullet}$ is the
direct limit
$$|X_{\bullet}|_{\infty} = \lim \limits_{\longrightarrow} |X_{\bullet}|_n$$
where the spaces $|X_{\bullet}|_n$ are inductively defined as
follows. Set $|X_{\bullet}|_0 = X_0$. Suppose we have defined
$|X_{\bullet}|_{n-1}$ and a map $\bar \chi_{n-1}: X_{n-1}\times
\Delta^{n-1} \to |X_{\bullet}|_{n-1}$ ($\bar \chi_0$ is the obvious
homeomorphism). Then $|X_{\bullet}|_{n}$ and $\bar \chi_n$ are
defined by the pushout diagram
$$\xymatrix{
X_n\times \partial \Delta ^n\ar[r]^{\bar \varphi_n}_{} \ar@{
>->}[d]^{}_{} & |X_{\bullet}|_{n-1}\ar@{ >->}[d]^{}_{}
\\
X_n\times \Delta ^n\ar[r]^{}_{\bar\chi_n} & |X_{\bullet}|_{n} }$$
where $\bar \varphi_n$ is defined by the following requirements, for any $i\in\left\{0,1,\ldots,n\right\}$, 
$$\bar \varphi_n\circ (X_n\times \delta^i) = \bar \chi_{n-1}\circ (d_i \times \Delta ^{n-1}):
X_n\times \Delta^{n-1} \to |X_{\bullet}|_{n-1}.$$ Again the facial identities $d_id_j = d_{j-1}d_i\; (i < j)$
assure that $\bar \varphi_n$ is a well-defined continuous map. Since
$\bar\chi_{n-1}$ is base-point preserving, so is $\bar \varphi_n$ and
hence $\bar\chi_n$.\\

We  sometimes consider facial spaces with upper indexes
$X^{\bullet}$. In such a case, the realizations up to $n$ are
denoted by $||X^{\bullet}||^n$ and $|X^{\bullet}|^n$.\\

Let $X_{\bullet} \stackrel{d_0}{\to} X$ be a facial resolution of  a
space $X$. We define a sequence of maps $\|X_{\bullet}\|_n \to X$ as
follows. The map $\|X_{\bullet}\|_0 \to X$ is the augmentation.
Suppose we have defined $\|X_{\bullet}\|_{n-1} \to X$ such that the
following diagram is commutative:
$$\xymatrix{
X_{n-1}\rtimes \Delta ^{n-1} \ar[r]^-{\chi_{n-1}}_{} \ar[d]^{}_{\pr}
& \|X_{\bullet}\|_{n-1} \ar [d]^{}_{}
\\
X_{n-1} \ar[r]^{}_{(d_0)^n}
& X,
}$$
where $(d_0)^n$ denotes the $n$-fold composition of the face operator $d_0$.
Consider the diagram
$$\xymatrix{
X_{n}\rtimes \Delta ^{n-1} \ar[r]^{d_i\rtimes \Delta^{n-1}}_{} \ar[d]^{}_{X_n\rtimes \delta ^i}
& X_{n-1}\rtimes \Delta ^{n-1} \ar [d]^{\chi_{n-1}}
\\
X_{n}\rtimes \partial \Delta ^{n} \ar[r]^{\varphi_{n}}_{}
\ar[d]^{}_{\pr} & \|X_{\bullet}\|_{n-1} \ar [d]^{}_{}
\\
X_{n} \ar[r]^{}_{(d_0)^{n+1}}
& X.
}$$
The upper square is commutative for all $i$ and so is the outer diagram.
It follows that the lower square is commutative.
We may therefore define $\|X_{\bullet}\|_n \to X$ to be the unique map which extends $\|X_{\bullet}\|_{n-1} \to X$
and which, pre-composed by $\chi _n$, is the composite
$\xymatrix@1{X_n\rtimes \Delta ^n\ar[r]^-{\pr}&X_n\ar[rr]^{(d_0)^{n+1}}&&X}$.
Similarly, we define a sequence of maps $|X_{\bullet}|_n \to X$.
We refer to the maps $\|X_{\bullet}\|_n \to X$ and $|X_{\bullet}|_n \to X$ as the \emph{canonical maps}
induced by the facial resolution $X_{\bullet} \to X$. The  next statement relates these two realizations; its proof is straightforward. 

\begin{proposition}\label{pointedvsfree}
Let $X_{\bullet}$ be a facial space. Then for each $n\in \N$, the canonical map $|X_{\bullet}|_n \to X$
factors through the canonical map $\|X_{\bullet}\|_n \to X$
\end{proposition}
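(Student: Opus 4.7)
The plan is to construct, by induction on $n$, a continuous map $q_n\colon |X_{\bullet}|_n\to\|X_{\bullet}\|_n$ through which the canonical map $|X_{\bullet}|_n\to X$ factors. The guiding observation is that for every $k$ the natural quotient $\pi_k\colon X_k\times\Delta^k\twoheadrightarrow X_k\rtimes\Delta^k$ collapses only $\{*\}\times\Delta^k$, and the face operators $d_i$ are basepoint-preserving (morphisms in the well-pointed category $\Top$), so the family $\{\pi_k\}$ is compatible with every structure map used to build the two realizations.

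For $n=0$ I would set $q_0:=\mathrm{id}_{X_0}$; both canonical maps reduce to the augmentation $d_0\colon X_0\to X$. For the inductive step, assume $q_{n-1}$ has been built and satisfies the intertwining relation $q_{n-1}\circ\bar\chi_{n-1}=\chi_{n-1}\circ\pi_{n-1}$. To apply the universal property of the pushout defining $|X_{\bullet}|_n$, one verifies face by face that $q_{n-1}\circ\bar\varphi_n=\varphi_n\circ\pi_n|_{X_n\times\partial\Delta^n}$: on the $i$th face $X_n\times\delta^i(\Delta^{n-1})$ both sides equal $\chi_{n-1}\circ(d_i\rtimes\Delta^{n-1})\circ\pi_{n-1}$, by the recursive formulas for $\bar\varphi_n$ and $\varphi_n$ combined with the inductive hypothesis. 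The pushout then yields a unique continuous extension $q_n\colon|X_{\bullet}|_n\to\|X_{\bullet}\|_n$ satisfying $q_n\circ\bar\chi_n=\chi_n\circ\pi_n$.

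The verification that $q_n$ really factors the canonical map is now immediate. On the subspace $|X_{\bullet}|_{n-1}$ it is the inductive hypothesis. On a new cell $X_n\times\Delta^n$, both the canonical map $|X_{\bullet}|_n\to X$ and the composite $|X_{\bullet}|_n\xrightarrow{q_n}\|X_{\bullet}\|_n\to X$ reduce, by construction, to $(d_0)^{n+1}\circ\pr\colon X_n\times\Delta^n\to X$, since the projection to $X_n$ factors through $\pi_n$. The only genuine technical point is the face-by-face compatibility check for $\bar\varphi_n$ in the inductive step, but this is entirely mechanical once the recursive definitions are unwound — which is why the authors describe the argument as straightforward.
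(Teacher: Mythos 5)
Your proof is correct and is precisely the natural construction the authors have in mind when they say the proof is straightforward: the pointwise quotients $\pi_k\colon X_k\times\Delta^k\to X_k\rtimes\Delta^k$ are compatible with all the structure maps (because the face operators are basepoint-preserving), so they induce a map $|X_\bullet|_n\to\|X_\bullet\|_n$ over $X$ by the pushout universal property at each stage. The only stylistic quibble is that $\pi_n|_{X_n\times\partial\Delta^n}$ should really be read as the quotient $X_n\times\partial\Delta^n\to X_n\rtimes\partial\Delta^n$ (and similarly the "$\pi_{n-1}$" appearing in the face-by-face check is the quotient at $X_n\times\Delta^{n-1}$, not at $X_{n-1}\times\Delta^{n-1}$), but the naturality of $(-)\times\Delta^k\to(-)\rtimes\Delta^k$ makes the intended identifications unambiguous.
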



\subsection{Facial resolutions with contraction}

A \emph{contraction} of a facial resolution $X_{\bullet} \stackrel{d_0}{\to} X$
consists of a sequence of morphisms $s: X_{n-1} \to X_n \quad (X_{-1} = X)$
such that $d_0 \circ s = \id$ and $d_i\circ s = s\circ d_{i-1}$ for $i\geq 1$.

\begin{proposition}\label{quotient}
Let $X_{\bullet}\stackrel{d_0}{\to} X$ be a facial resolution which
admits a contraction $s: X_{n-1} \to X_n \quad (X_{-1} = X)$.
For any $n\geq 0$, $|X_{\bullet}|_{n}$ can be identified with the
quotient space $X_n\times \Delta^n /\sim$
 where the relation is given by
 $$(x,t_0,...,t_k,...,t_n)\sim (sd_kx,0,t_0,...,\hat{t}_k,...,t_n),\quad \text{if  } t_k=0.$$
As usual, the expression $\hat{t}_k$ means that $t_k$ is omitted.
 Under this identification the canonical map $|X_{\bullet}|_{n}\to
 X$ is given by $[x,t_0,...,t_k,...,t_n]\mapsto (d_0)^{n+1}(x)$  and the inclusion
 $|X_{\bullet}|_{n}\cof
 |X_{\bullet}|_{n+1}$ is given by $[x,t_0,...,t_k,...,t_n]\mapsto
 [sx,0,t_0,...,t_k,...,t_n]$.
\end{proposition}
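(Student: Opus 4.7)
My plan is to proceed by induction on $n$, constructing mutually inverse maps between $|X_\bullet|_n$ and the proposed quotient $Q_n := X_n\times\Delta^n/\!\sim$ via the pushout defining $|X_\bullet|_n$. The base case $n=0$ is immediate: $\Delta^0$ is a single point, the relation $\sim$ is vacuous, and $Q_0 = X_0 = |X_\bullet|_0$, with the canonical map coinciding with $d_0$.

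For the inductive step, I first build $\alpha_n\colon Q_n \to |X_\bullet|_n$ by verifying that $\bar\chi_n\colon X_n\times\Delta^n\to |X_\bullet|_n$ respects $\sim$. When $t_k = 0$, unwinding the pushout definition of $\bar\chi_n$ gives
\[
\bar\chi_n(x,t_0,\ldots,0,\ldots,t_n) = \bar\chi_{n-1}(d_k x, (t_0,\ldots,\hat{t}_k,\ldots,t_n)),
\]
while, viewing the barycentric coordinates on the right-hand side as a point on the $0$-th face of $\Delta^n$,
\[
\bar\chi_n(sd_k x, 0, t_0,\ldots,\hat{t}_k,\ldots,t_n) = \bar\chi_{n-1}(d_0sd_k x, (t_0,\ldots,\hat{t}_k,\ldots,t_n)).
\]
Since $d_0 s = \id$, these agree, so $\bar\chi_n$ descends to $\alpha_n$.

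Second, to construct $\beta_n\colon |X_\bullet|_n \to Q_n$, I would use the pushout property with the projection $X_n\times\Delta^n\to Q_n$ and a map $\iota\colon Q_{n-1} \to Q_n$ (using $|X_\bullet|_{n-1}\cong Q_{n-1}$ by induction) defined by $[y,t_0,\ldots,t_{n-1}]\mapsto [sy,0,t_0,\ldots,t_{n-1}]$. Compatibility on $X_n\times\partial\Delta^n$ reduces to the identity $[x,\delta^k(t)] = [sd_k x,\delta^0(t)]$ in $Q_n$, which is precisely the defining relation. The main technical obstacle I anticipate is showing $\iota$ is well-defined: applying $\iota$ to both sides of a relation $[y,t_0,\ldots,t_{n-1}]\sim [sd_k y,0,t_0,\ldots,\hat{t}_k,\ldots,t_{n-1}]$ in $Q_{n-1}$ yields two points that must become equal in $Q_n$, and one bridges them by applying the relation in $Q_n$ at position $k+1$ and invoking the simplicial identity $d_{k+1}s = sd_k$.

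Finally, that $\alpha_n$ and $\beta_n$ are mutually inverse follows from a routine check on the two components of the pushout, again invoking $d_0 s = \id$ on the $|X_\bullet|_{n-1}$-summand. The stated formula for the canonical map is then immediate from the definition $\bar\chi_n(x,t)\mapsto (d_0)^{n+1}(x)$, and the stated formula for the inclusion $|X_\bullet|_n\hookrightarrow|X_\bullet|_{n+1}$ is exactly the map $\iota$ appearing at the next step of the induction. The principal care needed throughout is the bookkeeping with omitted coordinates and reindexed positions when translating boundary identifications into the quotient description.
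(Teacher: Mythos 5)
Your proof is correct and takes essentially the same route as the paper: both arguments proceed by induction, both invoke the pushout universal property to produce the map $|X_\bullet|_n\to Q_n$, and both reduce the required compatibilities to the contraction identities $d_0 s=\id$ and $d_i s=sd_{i-1}$. The one cosmetic difference is that you explicitly construct the inverse $\alpha_n$ by checking that $\bar\chi_n$ respects $\sim$ (via $d_0 s=\id$), whereas the paper instead verifies that the square with vertical maps $X_n\times\partial\Delta^n\rightarrowtail X_n\times\Delta^n$ and $Q_{n-1}\to Q_n$ is itself a pushout, which amounts to the same verification. Your flagging of the well-definedness of $\iota$ and its resolution via the relation applied at position $k+1$ together with $d_{k+1}s=sd_k$ is exactly the point the paper disposes of in its opening remark that ``the simplicial identities together with the contraction properties guarantee that the relation is unambiguously defined.''
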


\begin{proof} We first note that the simplicial identities together
with the contraction properties guarantee that the relation is
unambiguously defined if various parameters are zero and also that
the two maps
$$\begin{array}{rcl}
X_n\times \Delta^n /\sim & \to & X_{n+1}\times \Delta^{n+1}/\sim\\
\left[x,t_0,...,t_k,...,t_n\right]& \mapsto
&\left[sx,0,t_0,...,t_k,...,t_n\right]
\end{array}$$
and
$$\begin{array}{rcl}
X_n\times \Delta^n /\sim & \to & X\\
\left[x,t_0,...,t_k,...,t_n\right]& \mapsto & (d_0)^{n+1}(x)
\end{array}$$
that we will denote by $\iota_n$ and $\varepsilon_n$ respectively
are well-defined.

Beginning with $\xi_0=\id$, we next construct a sequence of
homeomorphisms
 $\xi_n:|X_{\bullet}|_{n} \to X_n\times \Delta^n /\sim$
 inductively by using the universal property of pushouts in the diagram
$$\xymatrix{
X_n\times \partial \Delta ^n \ar[r]^{\bar\varphi_n}_{} \ar@{
>->}[dd]^{}_{}
& |X_{\bullet}|_{n-1}\ar@{ >->}[dd]^{}_{} \ar[rd]^{\xi_{n-1}}\\
&&X_{n-1}\times \Delta^{n-1} /\sim \ar[dd]^{\iota_{n-1}}_{}
\\
X_n\times \Delta ^n\ar[r]^{}^{\bar\chi_n} \ar[rrd]_{q_n}
& |X_{\bullet}|_{n}\ar@{.>}[dr]^{\xi_n} &\\
&&X_n\times \Delta^n /\sim }$$ where $q_n$ is the identification
map. If $t_k=0$, the construction up to $n-1$ implies 
$$\xi_{n-1}\circ
\bar\varphi_n(x,t_0,...,t_n)=q_{n-1}\circ (d_k\times
\Delta^{n-1})=[d_kx,t_0,...\hat{t}_k,...,t_n].$$
Therefore, we see that the diagram
$$\xymatrix{
X_n\times \partial \Delta ^n\ar[rr]^-{\xi_{n-1}\circ\bar\varphi_n}_{}
\ar@{
>->}[d]^{}_{} && X_{n-1}\times \Delta^{n-1} /\sim
\ar[d]^{\iota_{n-1}}_{}
\\
X_n\times \Delta ^n\ar[rr]^{}_{q_n} && X_n\times \Delta^n /\sim }$$
is commutative and, by checking the universal property, that it is a
pushout. Thus $\xi_n$ exists and is a homeomorphism. Through this
sequence of homeomorphisms, $\iota_n$ corresponds to the inclusion
$|X_{\bullet}|_{n}\cof |X_{\bullet}|_{n+1}$ and $\varepsilon_n$ to
the canonical map $|X_{\bullet}|_{n}\to X$.
\end{proof}

\begin{proposition}\label{facialres}
Let $X_{\bullet}\stackrel{d_0}{\to} X$ be a facial resolution which
admits a natural contraction $s: X_{n-1} \to X_n \quad (X_{-1} = X)$.
For any $n\geq 0$, the canonical map $|X_{\bullet}|_n\to X$ admits a
(natural) section $\sigma_n:X\to |X_{\bullet}|_n$ and the inclusion
$|X_{\bullet}|_{n-1}\cof |X_{\bullet}|_{n}$ is naturally homotopic
to $\sigma_n$ pre-composed by the canonical map:
$$\xymatrix{
|X_{\bullet}|_{n-1}\ar[rr]\ar[rd] && |X_{\bullet}|_n\\
& X \ar[ur]_{\sigma_n}& }$$ \noindent In particular, if the facial
resolution $X_{\bullet} \to \ast$ admits a natural contraction then the
inclusions $|X_{\bullet}|_{n-1} \cof |X_{\bullet}|_{n}$ are
naturally homotopically trivial.
\end{proposition}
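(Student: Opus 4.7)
The plan is to adopt the concrete quotient description $|X_{\bullet}|_n \cong X_n\times\Delta^n/\!\sim$ from \propref{quotient} and construct both the section and the homotopy by explicit formulas on $X_n\times\Delta^n$. The most natural section is $\sigma_n = \iota^n\circ s$, the iterated inclusion of $s(x)\in X_0=|X_{\bullet}|_0$ into $|X_{\bullet}|_n$; unwinding the inclusion formula of \propref{quotient} yields
$$\sigma_n(x) = \bigl[s^{n+1}(x),\, 0,\ldots,0,1\bigr],$$
where $(0,\ldots,0,1)$ is the basepoint of $\Delta^n$. Naturality of $\sigma_n$ is then manifest, and $\varepsilon_n\circ\sigma_n = \id_X$ reduces by induction to the contraction identity $d_0 s = \id$.

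For the homotopy, I would linearly interpolate inside $\Delta^n$ from the vertex $e_0=(1,0,\ldots,0)$ to the face point $(0,t_0,\ldots,t_{n-1})$, holding the $X_n$-component equal to $sy$. Concretely, define
$$H_r\bigl([y,t_0,\ldots,t_{n-1}]\bigr) = \bigl[sy,\; 1-r,\; rt_0,\ldots, rt_{n-1}\bigr].$$
At $r=1$ this reads $[sy,0,t_0,\ldots,t_{n-1}]$, the inclusion formula of \propref{quotient}. At $r=0$ it reads $[sy,1,0,\ldots,0]$; successive applications of $\sim$ at positions $1,2,\ldots,n$ slide the unique non-zero coordinate from position $0$ to position $n$, each step using $d_{k+1}s = sd_k$ to promote the $X_n$-component from $s^k(d_0)^{k-1}y$ to $s^{k+1}(d_0)^k y$. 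After $n$ steps one arrives at $[s^{n+1}(d_0)^n y,0,\ldots,0,1]=\sigma_n(\varepsilon_{n-1}([y,t]))$.

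The main technical step, and the expected obstacle, is verifying that $H_r$ descends to the quotient $|X_{\bullet}|_{n-1}$. By transitivity it suffices to check the elementary identification: if $t_k=0$, the images of $(y,t)$ and $(sd_k y,\,0,t_0,\ldots,\hat t_k,\ldots,t_{n-1})$ under $H_r$ give two points of $X_n\times\Delta^n$ whose simplex parts agree up to a swap of the first two coordinates and whose $X_n$-parts are $sy$ and $s^2 d_k y$ respectively. A single application of $\sim$ on each side -- at position $k+1$ on the left (where the coordinate $rt_k=0$ sits) and at position $1$ on the right (where the inserted $0$ sits) -- reduces both to the common representative
$$\bigl[s^2 d_k y,\,0,\,1-r,\,rt_0,\ldots,\widehat{rt_k},\ldots,rt_{n-1}\bigr],$$
the key cancellations being $d_{k+1}s=sd_k$ on the left and $d_0 s=\id$ (which forces $sd_1(s^2 d_k y)=s^2 d_k y$) on the right. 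Continuity of $H$ is clear from the formula, and naturality again reduces to that of $s$.

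Finally, the \emph{in particular} clause follows at once: when $X=*$, the section $\sigma_n$ is constant, hence so is $\sigma_n\circ\varepsilon_{n-1}$, and the natural homotopy $H$ above provides the required natural nullhomotopy of $|X_{\bullet}|_{n-1}\hookrightarrow|X_{\bullet}|_n$.
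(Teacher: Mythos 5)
Your proposal is correct and follows the paper's own argument essentially verbatim: the section $\sigma_n(x)=[s^{n+1}(x),0,\ldots,0,1]$ and the straight-line homotopy in the first barycentric coordinate are exactly the paper's formulas (with the parameter reversed), and the paper's appeal to $sd_n\cdots sd_1s=s^{n+1}d_0^n$ is precisely the $n$-step sliding you spell out. The only imprecision is the phrase ``agree up to a swap of the first two coordinates'' (the simplex parts actually differ by a cyclic shift of positions $1,\ldots,k+1$, not a transposition), but the single application of $\sim$ at position $k+1$ on one side and position $1$ on the other, using $d_{k+1}s=sd_k$ and $d_0s=\id$, is exactly the right check and yields the common representative you write down.
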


\begin{proof}
Through the identification established in
\propref{quotient}, the section $\sigma_n:X\to |X_{\bullet}|_n$ is given
by
$$\sigma_n(x)=[(s)^{n+1}(x),0,...,0,1].$$
 Using the fact that
$$sd_nsd_{n-1}\cdots sd_2sd_1 s= (s)^{n+1}(d_0)^{n},$$
we calculate that the (well-defined) map
$H:|X_{\bullet}|_{n-1}\times I\to |X_{\bullet}|_{n-1}$ given by
$$H([x,t_0,...,t_{n-1}],u)=[sx,u,(1-u)t_0,...,(1-u)t_{n-1}]$$
is a homotopy between the inclusion and $\sigma_n$ pre-composed by
the canonical map $|X_{\bullet}|_{n-1}\to X$.
\end{proof}


\section{First part of \thmref{thm:main}:  the map $\|\Lambda_{\bullet} X\|_{n-1} \to G_n(X)$}

Let $X\in\Top$. We consider the facial resolution $\Lambda_{\bullet}(X) \to X$ where $\Lambda_{n}(X) = (\Sigma \Omega)^{n+1} X$, the face operators $d_i : (\Sigma \Omega)^{n+1} X \to (\Sigma \Omega)^n X$ are defined by $d_i = (\Sigma \Omega )^i(\ev_{(\Sigma \Omega)^{n-i}X})$, and the augmentation is $d_0=\ev_X : \Sigma \Omega X \to X$. 

\begin{theorem}\label{thm:easyway}
Let $X\in\Top$. For each $n\in\N$, the canonical map
$\|\Lambda_\bullet X\|_{n-1}\to X$
factors through the Ganea fibration 
$G_n(X)\to X$.
\end{theorem}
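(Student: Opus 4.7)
The plan is to proceed by induction on $n$.

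For the base case $n = 1$, we have $\|\Lambda_\bullet X\|_0 = \Sigma\Omega X$ and the canonical map to $X$ is the evaluation $\ev_X$. Since $G_1(X)$ is by definition the fibration replacement of $\ev_X$, the standard inclusion map $\Sigma\Omega X \to G_1(X)$ into this replacement provides the required factorization through $p_1$.

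For the inductive step, I assume that a map $f_{n-1}: \|\Lambda_\bullet X\|_{n-2} \to G_{n-1}(X)$ over $X$ has already been produced up to homotopy. Composing with the natural inclusion $G_{n-1}(X) \to G_n(X)$ yields $g: \|\Lambda_\bullet X\|_{n-2} \to G_n(X)$ over $X$. Using the defining pushout for $\|\Lambda_\bullet X\|_{n-1}$ from $\|\Lambda_\bullet X\|_{n-2}$, what remains is to extend the composite $g \circ \varphi_{n-1}: \Lambda_{n-1}X \rtimes \partial\Delta^{n-1} \to G_n(X)$ across the cofibration $\Lambda_{n-1}X \rtimes \partial\Delta^{n-1} \hookrightarrow \Lambda_{n-1}X \rtimes \Delta^{n-1}$, coherently with the map to $X$.

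The key point I would exploit is that, after post-composing with $p_n: G_n(X) \to X$, the attaching map factors through the projection $\Lambda_{n-1}X \rtimes \partial\Delta^{n-1} \to \Lambda_{n-1}X$ followed by the iterated evaluation $\Lambda_{n-1}X = (\Sigma\Omega)^n X \to X$, which manifestly extends to $\Lambda_{n-1}X \rtimes \Delta^{n-1}$. Thus the genuine obstruction to extending lies in the homotopy fiber $F_{n-1}(X)$ of $p_{n-1}$, and this is precisely the obstruction that the mapping cone $C(F_{n-1}(X))$ in the construction $G_n(X) \simeq G_{n-1}(X) \cup C(F_{n-1}(X))$ is designed to kill. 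The facial identities $d_i d_j = d_{j-1} d_i$ ensure that the restrictions of $g \circ \varphi_{n-1}$ to different faces of $\partial\Delta^{n-1}$ land coherently in $G_{n-1}(X)$ modulo $F_{n-1}(X)$.

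The main obstacle I anticipate is arranging these null-homotopies coherently across all faces of $\partial\Delta^{n-1}$ so that the face-wise extensions glue into a single well-defined map on $\Lambda_{n-1}X \rtimes \Delta^{n-1}$. This is where the Appendix's variant of Libman's theorem is likely essential: it supplies the Vogt-style strictification tools to perform the extension rigidly, avoiding a cascade of higher coherence data that would otherwise have to be tracked by hand. An alternative route would be to identify $\Lambda_{n-1}X \rtimes \Delta^{n-1}$ as an iterated cone on $\Lambda_{n-1}X$ (viewing $\Delta^{n-1}$ as an iterated cone on $\Delta^0$) and to match this tower of cones with the iterative cone attachments of the Ganea construction, though controlling naturality along that route appears more delicate.
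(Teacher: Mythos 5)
Your overall skeleton (induction, base case via the fibration replacement, reduce the inductive step to an extension problem over the defining pushout of $\|\Lambda_\bullet X\|_{n-1}$, the obstruction lives in $F_{n-1}(X)$) matches the paper's strategy. But there is a genuine gap where you announce "the main obstacle I anticipate," and your guess about how to close it points at the wrong tool.

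The paper does not need Libman's theorem for this direction; that theorem is invoked only for the harder map $G_n(X)\to\|\Lambda_\bullet X\|_n$ in Theorem~\ref{thm:hardway}. What actually makes the extension work here is Lemma~\ref{lem:Puppetrick}: the pushout that glues $\Lambda_{n-1}X\rtimes\Delta^{n-1}$ onto $\|\Lambda_\bullet X\|_{n-2}$ along $\Lambda_{n-1}X\rtimes\partial\Delta^{n-1}$ is shown (by the Puppe trick, using that $\Lambda_{n-1}X$ is a suspension) to produce a single cofiber sequence
$$(\Sigma\Omega)^nX\wedge\partial\Delta^{n-1}\;\xrightarrow{\ \tilde v_{n-2}\ }\;\|\Lambda_\bullet X\|_{n-2}\;\xrightarrow{\ v_{n-2}\ }\;\|\Lambda_\bullet X\|_{n-1},$$
with a genuine smash product as the "fiber term." This collapses the face-by-face coherence problem you worry about into one lifting problem. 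The null-homotopy you need is not your "manifestly extends" observation; it is the simpler fact that $v_{n-2}\circ\tilde v_{n-2}\simeq *$ (consecutive maps in a cofiber sequence), whence $\lambda_{n-2}\circ\tilde v_{n-2}=\lambda_{n-1}\circ v_{n-2}\circ\tilde v_{n-2}\simeq *$. That gives a single lift of $\Phi_{n-2}\circ\tilde v_{n-2}$ through $F_{n-1}(X)\to G_{n-1}(X)$, and comparing cofibers produces $\Phi_{n-1}:\|\Lambda_\bullet X\|_{n-1}\to G_n(X)$ over $X$. Without this reduction, your argument stalls exactly at the point you flag: you have not shown how to assemble face-wise null-homotopies into an extension, and the appendix theorem does not provide that. (Your proposed alternative, viewing $\Lambda_{n-1}X\rtimes\Delta^{n-1}$ as an iterated cone, is in the spirit of the paper's closing remark that one can instead bound the cone length of $\|\Lambda_\bullet X\|_{n-1}$ by $n$ and appeal to standard LS-category results; but you do not carry it out.)
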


The proof  uses the next result.

\begin{lemma}\label{lem:Puppetrick}
Given a pushout
$$\xymatrix{
\Sigma A\rtimes \partial \Delta^n\ar[rr]\ar@{ >->}[d]^{}_{}&&Y\ar[d]^f\\
\Sigma A\rtimes \Delta^n\ar[rr]&&Y'\\}$$
where the left-hand vertical arrow is a cofibration, 
then there exists a cofiber sequence
$\xymatrix@1{\Sigma A\land \partial \Delta^n\ar[r]&Y\ar[r]^f&Y'}$.
\end{lemma}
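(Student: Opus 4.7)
The key tool is that the suspension $\Sigma A$ is a co-H-space, which allows one to split the half-smash. Recall that for any well-pointed space $Z$ the natural cofibration sequence
\[
\Sigma A\hookrightarrow \Sigma A\rtimes Z\to \Sigma A\wedge Z,
\]
(first map $x\mapsto[x,*_Z]$; second the canonical quotient) splits up to homotopy because $\Sigma A$ is a co-H-space, giving a natural homotopy equivalence
\[
\Sigma A\rtimes Z\;\simeq\;\Sigma A\vee(\Sigma A\wedge Z).
\]

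My plan is first to apply this splitting simultaneously to $Z=\partial\Delta^n$ and $Z=\Delta^n$. Since $\Delta^n$ is based contractible (via the straight-line contraction to its basepoint $(0,\dots,0,1)$, which fixes that point), the smash $\Sigma A\wedge\Delta^n$ is based contractible, and hence $\Sigma A\rtimes\Delta^n\simeq\Sigma A$. Naturality in $Z$ of the splitting ensures that, under these identifications, the cofibration $\Sigma A\rtimes\partial\Delta^n\hookrightarrow \Sigma A\rtimes\Delta^n$ corresponds, up to homotopy, to the wedge projection
\[
\mathrm{pr}_1\colon \Sigma A\vee(\Sigma A\wedge\partial\Delta^n)\longrightarrow\Sigma A.
\]

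Next, using the gluing lemma (homotopy invariance of pushouts along cofibrations in $\Top$), I would replace the given pushout with the homotopy-equivalent pushout
\[
\xymatrix{
\Sigma A\vee(\Sigma A\wedge\partial\Delta^n)\ar[r]^-{\phi\vee\psi}\ar[d]_-{\mathrm{pr}_1}&Y\ar[d]\\
\Sigma A\ar[r]&Y',
}
\]
in which $\phi\colon\Sigma A\to Y$ and $\psi\colon\Sigma A\wedge\partial\Delta^n\to Y$ are the two components of the composite $\Sigma A\rtimes\partial\Delta^n\to Y$ under the splitting. This pushout is immediate to compute by hand: because $\mathrm{pr}_1$ restricts to the identity on the first summand, the $\Sigma A$-factor is harmlessly absorbed into $Y$ via $\phi$, while the $\Sigma A\wedge\partial\Delta^n$-factor is collapsed onto the basepoint. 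The outcome is the mapping cone
\[
Y'\;\simeq\; Y\cup_\psi\cC(\Sigma A\wedge\partial\Delta^n),
\]
which is precisely the statement that $\Sigma A\wedge\partial\Delta^n\xrightarrow{\psi}Y\to Y'$ is a cofiber sequence.

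The main difficulty lies in verifying that the splittings of $\Sigma A\rtimes\partial\Delta^n$ and $\Sigma A\rtimes\Delta^n$ are compatible (up to coherent homotopy) with the inclusion $\partial\Delta^n\hookrightarrow\Delta^n$, so that the gluing lemma legitimately replaces the original pushout by the wedge-projection pushout above; the well-pointedness and cofibrancy conventions of $\Top$ are used precisely at this point.
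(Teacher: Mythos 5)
Your proposal starts along the same lines as the paper: both use the Puppe splitting $\Sigma A\rtimes Z\simeq\Sigma A\vee(\Sigma A\wedge Z)$ (the paper calls this the ``Puppe trick''), and both plan to transport the given pushout through this equivalence. The paper handles the rectification problem you flag (the splittings are only homotopy-coherent with $\partial\Delta^n\hookrightarrow\Delta^n$) by inverting the Puppe equivalences into a strictly commutative square, using the homotopy extension property of the cofibration on the left column; that is precisely the step ``because the left-hand vertical arrow is a cofibration'' in the paper's proof, so it is good that you identified it as the crux.

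There is, however, a genuine error in your final step. You collapse $\Sigma A\rtimes\Delta^n\simeq\Sigma A$ and replace the left leg of the span by the projection $\mathrm{pr}_1:\Sigma A\vee(\Sigma A\wedge\partial\Delta^n)\to\Sigma A$. But $\mathrm{pr}_1$ is not a cofibration, so the gluing lemma no longer applies to the new span, and the strict pushout of
\[
\Sigma A\xleftarrow{\;\mathrm{pr}_1\;}\Sigma A\vee(\Sigma A\wedge\partial\Delta^n)\xrightarrow{\;\phi\vee\psi\;}Y
\]
is the quotient $Y/\psi\bigl(\Sigma A\wedge\partial\Delta^n\bigr)$, not the mapping cone $Y\cup_\psi\cC(\Sigma A\wedge\partial\Delta^n)$; these agree only when $\psi$ is a cofibration, which it has no reason to be. The paper avoids this by \emph{not} collapsing the lower-left corner: it keeps $\Sigma A\vee(\Sigma A\wedge\Delta^n)$ so that the left leg $\Sigma A\vee(\Sigma A\wedge\partial\Delta^n)\rightarrowtail\Sigma A\vee(\Sigma A\wedge\Delta^n)$ remains a cofibration, and then pastes with the pushout square
\[
\xymatrix@=14pt{\Sigma A\wedge\partial\Delta^n\ar[r]\ar@{ >->}[d]&\Sigma A\vee(\Sigma A\wedge\partial\Delta^n)\ar@{ >->}[d]\\ \Sigma A\wedge\Delta^n\ar[r]&\Sigma A\vee(\Sigma A\wedge\Delta^n)}
\]
to recognise the total pushout as $\bigl(\Sigma A\wedge\Delta^n\bigr)\cup_{\Sigma A\wedge\partial\Delta^n}Y$, which \emph{is} the mapping cone since $\Delta^n=C\partial\Delta^n$. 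Your argument can be repaired by either keeping $\Sigma A\vee(\Sigma A\wedge\Delta^n)$ as the paper does, or by replacing $\mathrm{pr}_1$ with the inclusion into its mapping cylinder; as written, the last ``compute by hand'' step does not produce the cofibre.
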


\begin{proof}
With the Puppe trick, we construct a commutative diagram
$$\xymatrix{
\Sigma A\vee \left(\Sigma A\wedge \partial \Delta^n\right)\ar@{ >->}[d]&&
\ar[ll]_-{\sim}\left(\Sigma A\rtimes \partial \Delta^n\right)\ar[d]\\
\Sigma A\vee \left(\Sigma A\wedge  \Delta^n\right)&&
\ar[ll]^-{\sim}\left(\Sigma A\rtimes  \Delta^n\right)\\}$$
from which we obtain a commutative diagram
$$\xymatrix{
\Sigma A\vee \left(\Sigma A\wedge \partial \Delta^n\right)\ar@{ >->}[d]\ar[rr]^-{\sim}&&
\left(\Sigma A\rtimes \partial \Delta^n\right)\ar[d]\\
\Sigma A\vee \left(\Sigma A\wedge  \Delta^n\right)\ar[rr]_-{\sim}&&
\left(\Sigma A\rtimes  \Delta^n\right)\\}$$
because the left-hand vertical arrow is a cofibration. We form now
$$\xymatrix{
\Sigma A\wedge \partial \Delta^n\ar[r]\ar@{ >->}[dd]\ar[r]&
\Sigma A\vee \left(\Sigma A\wedge \partial \Delta^n\right)\ar@{ >->}[dd]\ar[rr]^{\sim}&&
\Sigma A\rtimes \partial \Delta^n\ar[ld]\ar[rr]\ar[dd]&&Y\ar[dl]\ar[dd]\\
&&\bullet_1\ar[rr]\ar[rd]^{\sim}&&\bullet_2\ar[rd]^{\sim}&\\
\Sigma A\wedge  \Delta^n\ar[r]&\Sigma A\vee \left(\Sigma A\wedge  \Delta^n\right)\ar[ru]^{\sim}\ar[rr]^{\sim}&&
\Sigma A\rtimes \Delta^n\ar[rr]&&Y'\\}$$
where $\bullet_1$ and $\bullet_2$ are built by pushout and the left-hand square is a pushout. The map $\bullet_2\rightarrow Y'$ is a weak equivalence because it is induced between pushouts by the weak equivalence
$\bullet_1\rightarrow \Sigma A\rtimes \Delta^n$.
\end{proof}

\begin{proof}[Proof of \thmref{thm:easyway}]
We suppose that $\Phi_{n-2}\colon \|\Lambda_\bullet X\|_{n-2}\to G_{n-1}(X)$ has been constructed over $X$ and observe that the existence of $\Phi_0$ is immediate. We consider the following commutative diagram
$$\xymatrix{
(\Sigma\Omega)^{n}(X)\wedge \partial \Delta^{n-1}\ar@{-->}[rr]^-{\hat{\Phi}_{n-2}}\ar[d]_{\tilde{v}_{n-2}}
&&F_{n-1}(X)\ar[d]\\
\|\Lambda_\bullet X\|_{n-2}\ar[rr]^{\Phi_{n-2}}\ar[d]_{v_{n-2}}
\ar@/^1pc/[ddr]^-{\lambda_{n-2}}
&&G_{n-1}(X)\ar[ddl]^{p_{n-1}}\\
\|\Lambda_\bullet X\|_{n-1} \ar@/_1pc/[rd]_{\lambda_{n-1}}&&\\
&X&\\}$$
where the left-hand column is a cofibration sequence by \lemref{lem:Puppetrick}. From the equalities
\begin{eqnarray*}
p_{n-1}\circ \Phi_{n-2}\circ \tilde{v}_{n-2}&=&
\lambda_{n-2}\circ  \tilde{v}_{n-2}\\
&=& \lambda_{n-1}\circ v_{n-2}\circ  \tilde{v}_{n-2}\simeq\ast,
\end{eqnarray*}
we deduce a map $\hat{\Phi}_{n-2}\colon (\Sigma\Omega)^{n}(X)\land \partial \Delta^{n-1}\rightarrow F_{n-1}(X)$ making the diagram homotopy commutative.
From the definition of $G_{n}(X)$ as a cofiber,
this gives a map $\Phi_{n-1}\colon \|\Lambda_\bullet X\|_{n-1}\rightarrow G_{n}(X)$ over $X$.
\end{proof}
Instead of the explicit construction above, we can also observe that the cone length of  $ \|\Lambda_\bullet X\|_{n-1}$ is less than or equal to $n$ and deduce \thmref{thm:easyway} from  basic results on Lusternik-Schnirelmann category, see \cite{CLOT}.

\section{The facial space ${\mathcal G} _{\bullet} (X)$}

For a space $X$ we denote by $P'X$ the Moore path space and by
$\Omega 'X$ the Moore loop space. Path multiplication turns $\Omega
'X$ into a topological monoid. Given a space $X$, we define the
facial space ${\mathcal G} _{\bullet} (X)$ by ${\mathcal G} _{n} (X)
= (\Omega 'X)^{n}$ with the face operators $d_i : (\Omega 'X)^{n}
\to (\Omega 'X)^{n-1}$ given by
$$d_i(\alpha_1,...,\alpha_n)=\left\{
\begin{array}{lc}
(\alpha_2,...,\alpha_n) & i=0\\
(\alpha_1,...,\alpha_{i-1},\alpha_i\alpha_{i+1},...,\alpha_n) & 0<i<n\\
(\alpha_1,...,\alpha_{n-1}) & i=n.\\
\end{array}\right. $$
\emph{The purpose of this section is to compare the free realization of ${\mathcal G} _{\bullet} (X)$ to the construction of the classifying space of $\Omega 'X$.}

 We work with the following construction of $B\Omega 'X$. The classifying space $B\Omega' X$ is the orbit space of the contractible $\Omega' X$-space $E\Omega 'X$ which is obtained as the direct limit of a
sequence of $\Omega' X$-equivariant cofibrations $E_n\Omega 'X \cof E_{n+1}\Omega 'X$. The spaces
$E_n\Omega 'X$ are inductively defined by $E_0\Omega 'X = \Omega 'X$, $E_{n+1}\Omega 'X = E_n\Omega 'X
\cup_{\theta} (\Omega 'X\times CE_n\Omega 'X)$ where $\theta$ is the action
$\Omega 'X\times E_n\Omega 'X \to E_n\Omega 'X$ and $C$ denotes the free (non-reduced) cone construction. The orbit spaces of the $\Omega 'X$-spaces $E_n\Omega ' X$ are denoted by $B_n\Omega 'X$.
For each $n\in \N$ this construction yields a cofibration $B_n\Omega 'X \cof B\Omega 'X$. It is well known that for simply connected spaces this cofibration is equivalent to the $n$th Ganea map $G_n(X) \to X$.

\begin{proposition}   \label{cat}
For each $n \in \N$ there is a natural commutative diagram
$$\xymatrix{
B_n\Omega 'X\ar[r]^{}_{} \ar[d]^{}_{}
& |{\mathcal G} _{\bullet} (X)|_n\ar [d]^{}_{}
\\
B\Omega 'X\ar[r]^{}_{}
& |{\mathcal G} _{\bullet} (X)|_{\infty}
}$$
in which the bottom horizontal map is a homotopy equivalence. 
\end{proposition}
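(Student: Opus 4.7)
$\textbf{Proof plan.}$ The plan is to introduce an auxiliary contractible free $\Omega' X$-space on the realization side whose orbit is $|\mathcal{G}_\bullet(X)|_\infty$, and to compare it equivariantly with the tower $E_n\Omega' X$ described above.

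$\textbf{Auxiliary facial space.}$ Let $\widetilde{\mathcal G}_\bullet(X)$ be the facial space with $\widetilde{\mathcal G}_n(X) = (\Omega' X)^{n+1}$ and face operators
$d_0(g_0,\dots,g_n) = (g_0 g_1, g_2,\dots,g_n)$,
$d_i(g_0,\dots,g_n) = (g_0,\dots,g_i g_{i+1},\dots,g_n)$ for $0<i<n$,
$d_n(g_0,\dots,g_n) = (g_0,\dots,g_{n-1})$.
Left multiplication on the first factor gives a natural $\Omega' X$-action whose orbit facial space is canonically identified with $\mathcal G_\bullet(X)$, and the formula $s(g_0,\dots,g_{n-1}) = (1,g_0,\dots,g_{n-1})$ defines a natural contraction of $\widetilde{\mathcal G}_\bullet(X) \to \ast$ in the sense of Subsection 1.2 (one checks $d_0 s = \id$ and $d_i s = s\, d_{i-1}$ for $i\geq 1$). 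Hence \propref{facialres} supplies a natural section $\sigma_n\colon \ast \to |\widetilde{\mathcal G}_\bullet|_n$ together with a natural null-homotopy
$$H_n \colon |\widetilde{\mathcal G}_\bullet|_{n-1} \times I \to |\widetilde{\mathcal G}_\bullet|_n$$
from the inclusion to the constant map at $\sigma_n(\ast)$; in particular $|\widetilde{\mathcal G}_\bullet|_\infty$ is contractible.

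$\textbf{Inductive lifting.}$ Construct natural $\Omega' X$-equivariant maps $\widetilde\phi_n\colon E_n\Omega' X \to |\widetilde{\mathcal G}_\bullet(X)|_n$ by induction on $n$, starting with $\widetilde\phi_0 = \id_{\Omega' X}$. Given $\widetilde\phi_{n-1}$, use the pushout $E_n\Omega' X = E_{n-1}\Omega' X \cup_\theta (\Omega' X \times CE_{n-1}\Omega' X)$ and define $\widetilde\phi_n$ on the cone part by the explicit equivariant formula
$$\widetilde\phi_n(g, [e,t]) \;=\; g\cdot H_n\bigl(\widetilde\phi_{n-1}(e),\, t\bigr).$$
This is well-defined on $CE_{n-1}\Omega' X$ because at $t=1$ the value is $g\cdot \sigma_n(\ast)$, independently of $e$; it matches $g\cdot \widetilde\phi_{n-1}(e) = \widetilde\phi_{n-1}(\theta(g,e))$ at $t=0$ by the equivariance of $\widetilde\phi_{n-1}$; and it is natural in $X$ because $\sigma_n$ and $H_n$ are. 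Taking $\Omega' X$-orbits then yields natural maps $\phi_n\colon B_n\Omega' X \to |\mathcal G_\bullet(X)|_n$ compatible with the filtrations, which assemble into the claimed commutative square and give $\phi_\infty\colon B\Omega' X \to |\mathcal G_\bullet(X)|_\infty$ in the colimit.

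$\textbf{The limit map is a homotopy equivalence.}$ Both projections $E\Omega' X \to B\Omega' X$ and $|\widetilde{\mathcal G}_\bullet|_\infty \to |\mathcal G_\bullet|_\infty$ are $\Omega' X$-principal quasi-fibrations with fibre $\Omega' X$: the first by the construction recalled at the start of the section, the second because each $\widetilde{\mathcal G}_n \to \mathcal G_n$ is a trivial $\Omega' X$-bundle, the faces are equivariant, and the thick (facial) realization in the compactly generated category $\Top$ preserves this structure. Since both total spaces are contractible and $\widetilde\phi_\infty$ restricts to the identity on the common fibre $\Omega' X$, a five-lemma comparison of the long exact sequences in homotopy forces $\phi_\infty$ to be a weak equivalence, hence a homotopy equivalence under the CW-type hypothesis on $\Top$.

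$\textbf{Main obstacle.}$ The inductive extension collapses to a one-line formula once the non-equivariant contraction from \propref{facialres} is available, so the real technical content is in the last step: verifying that the projection $|\widetilde{\mathcal G}_\bullet|_\infty \to |\mathcal G_\bullet|_\infty$ really is a quasi-fibration with fibre $\Omega' X$. This is a Dold--Thom / Segal-type statement about thick realizations of simplicial principal bundles, and I expect it to be the main technical lemma underpinning the proof.
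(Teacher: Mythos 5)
Your auxiliary facial space $\widetilde{\mathcal G}_\bullet(X)$ is precisely the paper's $P_\bullet(X)$ (there $P_n(X)=\Omega'X\times(\Omega'X)^n$ with the same monoid-multiplication face maps), your contraction $s(g_0,\dots,g_{n-1})=(1,g_0,\dots,g_{n-1})$ is the paper's, and your explicit formula $\widetilde\phi_n(g,[e,t])=g\cdot H_n(\widetilde\phi_{n-1}(e),t)$ simply spells out the paper's equivariant extension of the cone-factorization of the homotopically trivial inclusion $|P_\bullet(X)|_{n-1}\cof |P_\bullet(X)|_{n}$. Up to the inductive lifting, the two proofs coincide. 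You part company only in the last step. The paper does not argue via quasi-fibrations: it notes that $g\colon E\Omega'X\to|P_\bullet(X)|_\infty$ is an $\Omega'X$-equivariant map between contractible free $\Omega'X$-spaces, hence a (non-equivariant) homotopy equivalence, and then invokes a standard fact of equivariant homotopy theory (cited as [1.16] of \cite{LSabstract}): an equivariant map between free $G$-spaces that is an ordinary homotopy equivalence induces a homotopy equivalence of orbit spaces. This completely sidesteps the lemma you flag as your ``main obstacle,'' and that obstacle is genuine --- proving that the thick realization of a levelwise trivial principal $\Omega'X$-bundle is again a principal quasi-fibration, and that the property survives passage to the colimit over $n$, is not routine (direct limits of quasi-fibrations are delicate), so the paper's appeal to the equivariant homotopy category is the more economical finish. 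Replacing your last step with that statement closes the proof without any new technical lemma.
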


\begin{proof}
We  obtain the diagram of the statement from a diagram of $\Omega 'X$-spaces by passing to orbit spaces. Consider the facial $\Omega'X$-space $P_{\bullet}(X)$ in which $P _{n} (X)$ is the
free $\Omega'X$-space $\Omega' X\times (\Omega 'X)^{n}$ and the
face operators $d_i : (\Omega 'X)^{n+1} \to (\Omega 'X)^{n}$ (which
are equivariant) are given by
$$d_i(\alpha_0,...,\alpha_n)=\left\{
\begin{array}{lc}
(\alpha_0,...,\alpha_{i-1},\alpha_i\alpha_{i+1},...,\alpha_n) & 0\leq i<n\\
(\alpha_0,...,\alpha_{n-1}) & i=n.\\
\end{array}\right.$$
The maps $s\colon P_{n-1}(X)\to P_n(X)$ given by 
$s(\alpha_0,\ldots,\alpha_{n-1})=(\ast,\alpha_0,\ldots,\alpha_{n-1})$
constitute a natural contraction of the facial resolution
$P_\bullet(X)\to\ast$. By \propref{facialres}, the maps
$|P_\bullet(X)|_{n-1}\to |P_\bullet(X)|_n$
are hence naturally homotopically trivial.

The construction of the realization of $P_{\bullet}(X)$ yields
$\Omega'X$-spaces. We construct a natural commutative diagram of equivariant maps
$$\xymatrix{
E_0\Omega 'X \ar@{ >->}[r]^{}_{} \ar[d]_{g_0}_{}
& E_1\Omega 'X\ar@{ >->}[r]^{}_{} \ar[d]^{g_1}_{} & \cdots \ar@{ >->}[r]^{}_{}
& E_n\Omega 'X\ar@{ >->}[r]^{}_{} \ar[d]^{g_n}_{} & \cdots
\\
|P_{\bullet}(X)|_0 \ar@{ >->}[r]^{}_{}
& |P_{\bullet}(X)|_1 \ar@{ >->}[r]^{}_{} & \cdots \ar@{ >->}[r]^{}_{}
& |P_{\bullet}(X)|_n \ar@{ >->}[r]^{}_{} & \cdots 
}$$
inductively as follows: The map $g_0$ is the identity $\Omega 'X \stackrel{=}{\to} \Omega 'X$. Suppose that $g_n$ is defined. Since the map $|P_{\bullet}(X)|_n \cof |P_{\bullet}(X)|_{n+1}$ is naturally homotopically trivial, it factors naturally through the cone $C|P_{\bullet}(X)|_n$. Extend this factorization equivariantly to obtain the following commutative diagram of $\Omega 'X$-spaces:
$$\xymatrix{
\Omega 'X\times |P_{\bullet}(X)|_n\ar[r]^{}_{} \ar[d]^{}_{}
& |P_{\bullet}(X)|_n\ar [d]^{}_{}
\\
\Omega 'X\times C|P_{\bullet}(X)|_n\ar[r]^{}_{}
& |P_{\bullet}(X)|_{n+1}.
}$$
Define $g_{n+1}$ to be the composite
\begin{eqnarray*}
\lefteqn{E_n\Omega 'X\cup_{\Omega 'X\times E_n\Omega 'X}(\Omega 'X\times CE_n\Omega 'X)}\\ 
&\to& |P_{\bullet}(X)|_n\cup_{\Omega 'X\times |P_{\bullet}(X)|_n}(\Omega 'X\times C|P_{\bullet}(X)|_n)\\
&\to& |P_{\bullet}(X)|_{n+1}.
\end{eqnarray*}
It is clear that $g_{n+1}$ is natural. In the direct limit we obtain a natural equivariant map $g : E\Omega 'X \to |P_{\bullet}(X)|_{\infty}$. This map is a homotopy equivalence. Indeed, $E\Omega 'X$ is contractible and, since each inclusion $|P_{\bullet}(X)|_n \cof |P_{\bullet}(X)|_{n+1}$ is homotopically trivial, $|P_{\bullet}(X)|_{\infty}$ is contractible, too. For each $n \in \N$ we therefore obtain the following natural commutative diagram of $\Omega'X$-spaces:
$$\xymatrix{
E_n\Omega 'X\ar[r]^{}_{} \ar[d]^{}_{}
& |P _{\bullet} (X)|_n\ar [d]^{}_{}
\\
E\Omega 'X\ar[r]^{\sim}_{}
& |P _{\bullet} (X)|_{\infty}.
}$$
Passing to the orbit spaces, we obtain the diagram of the statement. It follows for instance from \cite[1.16]{LSabstract} that the map $B\Omega 'X \to |{\mathcal G} _{\bullet} (X)|_{\infty}$ is a homotopy equivalence.
\end{proof}

\begin{remark} 
Note that the upper horizontal map in the diagram of \propref{cat} is not a homotopy equivalence in general. Indeed, for $X = *$, $B_1\Omega'X$ is contractible but $|{\mathcal G}_{\bullet}(X)|_1 \simeq S^1$. It can, however, be shown that there also exists a diagram as in \propref{cat} with the horizontal maps reversed.
\end{remark}

\section{The facial resolution $\Omega'\Lambda_{\bullet}X \to \Omega' X$ admits a  contraction}
Consider the natural map $\gamma _X \colon \Omega 'X \to \Omega' \Sigma \Omega X$, $\gamma _X(\omega ,t) = (\nu (\omega ,t),t)$ where $\nu (\omega ,t): \R^+ \to \Sigma \Omega X$ is given by
$$\nu (\omega ,t)(u) = \left\{ \begin{array}{ll}
\left[\omega _t, \frac{u}{t} \right], &  u < t,\\
\left[c_*,0\right], &   u \geq t.
\end{array}\right.$$
Here, $c_*$ is the constant path $u \mapsto *$ and $\omega_t \colon I \to X$ is the loop defined by $\omega_t(s) = \omega (ts)$.

\begin{lemma}
The map $\gamma_X$ is continuous.
\end{lemma}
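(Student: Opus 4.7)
The plan is to exploit that $\Top$ is Cartesian closed. Continuity of $\gamma_X$ decomposes into continuity of its two coordinates; the second, $t$, is obvious. For the first, by the exponential law it suffices to show continuity of the adjoint evaluation
\[
\tilde\gamma_X : \Omega 'X\times \R^+\longrightarrow \Sigma\Omega X,\qquad \tilde\gamma_X(\omega,t,u)=\nu(\omega,t)(u).
\]
I would verify this by the pasting lemma applied to the closed cover $A=\{u\le t\}$, $B=\{u\ge t\}$ of $\Omega 'X\times \R^+\times \R^+$.

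On $B$ the map is constant at the basepoint $*$, hence continuous. On the open subset $A\cap\{t>0\}$, $\tilde\gamma_X$ is the composite
$(\omega,t,u)\mapsto(\omega_t,u/t)\mapsto [\omega_t,u/t]$, continuous because $(\omega,t)\mapsto \omega_t$ is the adjoint of the evaluation $(\omega,t,s)\mapsto \omega(ts)$, the fraction $u/t$ is continuous on $\{t>0\}$, and the quotient $q\colon \Omega X\times [0,1]\to \Sigma\Omega X$ is continuous. On $A\cap B\cap\{t>0\}$ both formulas give $[\omega_t,1]=*$, so the two pieces match. The only remaining issue is the single boundary point $(c_*,0,0)\in A$ with $t=0$, where the formula has a $0/0$ indeterminacy; this is the main obstacle.

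I would handle this last point with the tube lemma. Given an open neighborhood $V$ of $*$ in $\Sigma\Omega X$, its preimage $q^{-1}(V)$ is open in $\Omega X\times [0,1]$ and contains $\{c_*\}\times[0,1]$. Compactness of $[0,1]$ together with the description of basic open sets in the compact--open topology on $\Omega X$ yields an open $W'\subset X$ with $*\in W'$ such that $\{f\in \Omega X : f([0,1])\subset W'\}\times [0,1]\subset q^{-1}(V)$. Fixing $\varepsilon>0$, the set $U=\{(\omega,t)\in\Omega 'X : t<\varepsilon,\ \omega([0,\varepsilon])\subset W'\}$ is then an open neighborhood of $(c_*,0)$, and for any $(\omega,t,u)\in (U\times\R^+)\cap A$ with $t>0$ one has $u\le t<\varepsilon$ and $\omega_t([0,1])=\omega([0,t])\subset W'$, whence $(\omega_t,u/t)\in q^{-1}(V)$ and $\tilde\gamma_X(\omega,t,u)\in V$; the case $t=0$ is immediate. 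The conceptual point that makes this work is that although $\omega_t$ need not converge uniformly in $\Omega X$ and $u/t$ can fluctuate throughout $[0,1]$, the whole slice $\{c_*\}\times [0,1]$ collapses under $q$, so uniform proximity of $\omega$ to $*$ on $[0,\varepsilon]$ is enough to pin $\tilde\gamma_X$ near the basepoint of $\Sigma\Omega X$.
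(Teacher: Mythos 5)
Your proof is correct, but it takes a genuinely different route from the paper's. Both arguments begin with the same reduction: by the exponential law it suffices to show that the adjoint $\nu^{\flat}\colon\Omega'X\times\R^+\to\Sigma\Omega X$ is continuous. The paper then resolves the $0/0$ indeterminacy at $(c_*,0,0)$ structurally, by a blow-up: it rewrites $(t,u)$ as $(r\cos\theta,r\sin\theta)$, observes that the resulting map $\phi(\omega,r,\theta)$ has a single globally continuous formula which is constant along the exceptional locus $r=0$, and descends $\phi$ through the quotient $(\omega,r,\theta)\mapsto(\omega,r\cos\theta,r\sin\theta)$ to obtain a continuous $\psi$; an auxiliary truncation $\rho$ and the reparametrization $\xi$ then exhibit $\nu^{\flat}$ as an explicit composite of continuous maps, with no case analysis at all. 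You instead keep the piecewise definition, invoke the pasting lemma on the closed cover $\{u\le t\}$, $\{u\ge t\}$, and treat the single bad point $(c_*,0,0)$ directly with the tube lemma, exploiting that the whole slice $\{c_*\}\times[0,1]$ is collapsed by the suspension quotient $q$ so that a uniform $W'$-tube controls $[\omega_t,u/t]$ no matter how $u/t$ fluctuates in $[0,1]$. Both are valid: the paper's factorization is choice-free and avoids epsilon arguments, whereas yours is shorter and makes the conceptual point more transparent --- the singularity is harmless precisely because the fibre over the indeterminate parameter is already crushed by $q$. Two small things you should say explicitly: that $[\omega_t,1]=*$ because $\Sigma\Omega X$ is the \emph{reduced} suspension (this is what makes the two pieces agree on $\{u=t,\,t>0\}$), and that the basic-open-set extraction uses the compact-open topology on $\Omega X$ (which is unproblematic in $\Top$ since $\Omega X = \mathrm{Map}_*(I,X)$ with $I$ compact is already compactly generated, so $k$-ification introduces no extra open sets at $c_*$).
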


\begin{proof}
It suffices to show that the map $\nu ^{\flat} : \Omega 'X \times \R^+ \to \Sigma \Omega X$, $(\omega ,t,u) \mapsto \nu(\omega ,t)(u)$ is continuous. Consider the subspace $W = \{\omega \in X^{\R^+}: \omega (0) = \ast\}$ of $X^{\R^+}$ and the continuous map $\rho : W \times \R^+ \to X^{\R^+}$ given by
$$\rho (\omega ,t)(u) = \left\{ \begin{array}{ll}
\omega (u), &  u\leq t,\\
\omega (t), &  u\geq t.
\end{array}\right.$$
Note that if $(\omega ,t) \in P'X$ then $\rho(\omega ,t) = \omega$. Consider the continuous map $$\phi : W \times \R^+ \times [0,\frac{\pi}{2}] \to \Sigma P'X$$ defined by
$$\phi(\omega ,r, \theta) = \left\{ \begin{array}{ll}
\left[\rho(\omega, r\cos \theta), r\cos \theta, \tan \theta \right], &  \theta \leq \frac{\pi}{4},\\
\left[c_*, 0, 0 \right], & \theta \geq \frac{\pi}{4}.
\end{array}\right.$$
When $r = 0$, we have $\phi(\omega ,r, \theta) = [c_*,0,0]$ for any $\theta$. Therefore $\phi$ factors through the identification map
$$W\times \R^+ \times [0,\frac{\pi}{2}] \to W\times \R^+ \times \R^+, (\omega ,r,\theta) \mapsto (\omega ,r\cos \theta, r\sin \theta)$$
and induces a continuous map $\psi : W \times \R^+ \times \R^+ \to \Sigma P'X$. Explicitly,
$$\psi(\omega ,t, u) = \left\{ \begin{array}{ll}
\left[\rho(\omega, t), t, \frac{u}{t} \right], &  u < t,\\
\left[c_*, 0, 0 \right], & u \geq t.
\end{array}\right.$$
Consider the continuous map $\xi : P'X \to PX$ defined by $\xi(\omega ,t)(s) = \omega (ts)$. Note that $\xi (\omega, t) = \omega _t$ if $(\omega,t) \in \Omega'X$ and, in particular, that $\xi (c_*,0) = c_*$. The restriction of $\Sigma \xi \circ \psi$ to $\Omega 'X \times \R^+$ factors through the subspace $\Sigma \Omega X$ of $\Sigma PX$ and the continuous map $$\Omega 'X \times \R^+ \to \Sigma \Omega X, (\omega ,t, u) \mapsto (\Sigma \xi \circ \psi) (\omega ,t, u)$$ is exactly $\nu ^{\flat}$.
\end{proof}

\begin{proposition} \label{omegacontraction}
The maps $s = \gamma_{(\Sigma \Omega)^nX} : \Omega'(\Sigma \Omega)^nX \to \Omega'(\Sigma \Omega)^{n+1}X$ define a contraction of
the facial resolution $\Omega'\Lambda_{\bullet}X \to \Omega 'X$.
\end{proposition}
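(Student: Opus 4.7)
The plan is to verify directly the two identities characterising a contraction, namely $d_0\circ s=\id$ and $d_i\circ s=s\circ d_{i-1}$ for $i\geq 1$, by exploiting the explicit formula for $\gamma$ given in the preceding lemma and reducing both to simple computations.

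For $d_0\circ s=\id$, the relevant $d_0$ at level $n$ of $\Omega'\Lambda_\bullet X$ is $\Omega'(\ev_{(\Sigma\Omega)^nX})$, so it suffices to show that $\Omega'(\ev_Y)\circ\gamma_Y=\id_{\Omega'Y}$ for every $Y\in\Top$. This is immediate from the definitions: for $(\omega,t)\in\Omega'Y$ and $u<t$, $\ev_Y[\omega_t,u/t]=\omega_t(u/t)=\omega(u)$, while for $u\geq t$ both sides equal the basepoint since $\omega$ is a Moore loop of length $t$.

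For the identities with $i\geq 1$, the key observation is that in $\Lambda_\bullet X$ one may factor $d_i=(\Sigma\Omega)^i(\ev_{(\Sigma\Omega)^{n-i}X})=\Sigma\Omega(f)$, where $f=(\Sigma\Omega)^{i-1}(\ev_{(\Sigma\Omega)^{n-i}X})$ is precisely the $(i-1)$-th face operator one simplicial level down. The required equation $d_i\circ s=s\circ d_{i-1}$ thus reduces to the single naturality square
$$\Omega'\Sigma\Omega(f)\circ\gamma_Y=\gamma_Z\circ\Omega'(f),\qquad f\colon Y\to Z,$$
i.e.\ to the fact that $\gamma$ is a natural transformation $\Omega'\Rightarrow\Omega'\Sigma\Omega$. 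This naturality is again a direct check from the formula for $\nu$: using $(f\omega)_t=f\circ\omega_t$, both composites send $(\omega,t)$ to the Moore path of length $t$ taking $u<t$ to $[f\circ\omega_t,u/t]$ and $u\geq t$ to the basepoint. No genuine obstacle arises; continuity of $\gamma$ was already handled by the preceding lemma, and the entire argument rests on this naturality together with the trivial identity $\ev\circ\nu(\omega,t)=\omega$.
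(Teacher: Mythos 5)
Your proof is correct. The verification that $d_0\circ s = \id$ coincides with the paper's direct computation, but for the higher face identities you take a cleaner route: you factor $d_i=\Sigma\Omega(d_{i-1})$ inside $\Lambda_\bullet X$ and thereby reduce all of the required identities $d_i\circ s=s\circ d_{i-1}$ $(i\geq 1)$ at once to the single naturality square $\Omega'\Sigma\Omega(f)\circ\gamma_Y=\gamma_Z\circ\Omega'(f)$. The paper instead writes out both $(d_j\circ s)(\omega,t)$ and $(s\circ d_{j-1})(\omega,t)$ explicitly and compares the two formulae $\sigma$ and $\tau$; the equality it obtains is exactly the naturality you isolate, resting on the same elementary identity $(f\circ\omega)_t=f\circ\omega_t$. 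So the underlying computations are identical, but your packaging is more conceptual: it makes manifest that only the cotriple structure of $\Sigma\Omega$ and the naturality of $\gamma$ are being used, and it would carry over verbatim to the more general cotriples $T$ envisaged in the paper's Open Questions section.
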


\begin{proof}
We have $(\Omega '(\ev_X)\circ \gamma_X)(\omega ,t) = \Omega '(\ev_X)(\nu (\omega ,t),t) = (\beta (\omega ,t),t)$ where
$$\beta (\omega ,t)(u) = \left\{ \begin{array}{ll}
\omega _t(\frac{u}{t}) = \omega (u), &  u < t,\\
\ast = \omega (u), & u \geq t.
\end{array}\right.$$
Hence $(\Omega '(\ev_X)\circ \gamma_X) = \id_{\Omega'X}$.

 In the same way one has $(\Omega '(\ev_{(\Sigma \Omega)^nX})\circ \gamma_{(\Sigma \Omega)^nX}) = \id_{(\Sigma \Omega)^nX}$. This shows the relation $d_0 \circ s = \id$. It remains to check that $d_j \circ s = s\circ d_{j-1}$, for $j \geq 1$. For $(\omega ,t) \in \Omega '(\Sigma \Omega)^{n}X$ we have $(d_j \circ s)(\omega ,t) = (\Omega'(\Sigma \Omega)^j(\ev_{(\Sigma \Omega)^{n-j}X})\circ \gamma_{(\Sigma \Omega )^nX})(\omega ,t) = (\sigma (\omega ,t),t)$ where
$$\sigma (\omega ,t)(u) = \left\{ \begin{array}{ll}
(\Sigma \Omega)^j(\ev_{(\Sigma \Omega)^{n-j}X})\left[\omega _t, \frac{u}{t}\right] = \left[(\Sigma \Omega)^{j-1}(\ev_{(\Sigma \Omega)^{n-j}X})\circ \omega _t, \frac{u}{t}\right], &  \!\!\!\!u < t,\\
(\Sigma \Omega)^j(\ev_{(\Sigma \Omega)^{n-j}X})\left[c_*,0\right] = \left[c_*,0\right], &
\!\!\!\! u \geq t.
\end{array}\right.$$
On the other hand, $(s\circ d_{j-1})(\omega ,t) = (\gamma_{(\Sigma \Omega )^{n-1}X}\circ \Omega'(\Sigma \Omega)^{j-1}(\ev_{(\Sigma \Omega)^{n-j}X}))(\omega ,t) = (\tau (\omega ,t),t)$
where
$$\tau (\omega ,t)(u) = \left\{ \begin{array}{ll}
\left[((\Sigma \Omega)^{j-1}(\ev_{(\Sigma \Omega)^{n-j}X})\circ \omega )_t, \frac{u}{t}\right], &  u < t,\\
\left[c_*,0\right], & u \geq t.
\end{array}\right.$$
This shows that $d_j \circ s = s\circ d_{j-1}$ $(j \geq 1)$.
\end{proof}

\section{Second part of \thmref{thm:main}: the map $G_n(X)\to  \|\Lambda_{\bullet}X\|_n$}

\noindent A \emph{bifacial space} is a facial object in the
category $d{\bf Top}$ of facial spaces. We will use notations like
$Z_{\bullet}^{\bullet}$ to denote bifacial spaces and refer to the
upper index as the column index and to the lower index as the row 
index. In this way, a bifacial space can be represented by a
diagram of the following type:

$$\xymatrix{
\vdots \ar@<4pt>[d]^-{\del_{n+1}}  \ar@{}[d]|-{..}\ar@<-4pt>[d]_-{\del_0}
&
\vdots \ar@<4pt>[d]^-{\del_{n+1}}  \ar@{}[d]|-{..}\ar@<-4pt>[d]_-{\del_0}
&
\vdots \ar@<4pt>[d]^-{\del_{n+1}}  \ar@{}[d]|-{..}\ar@<-4pt>[d]_-{\del_0}
&&
\vdots \ar@<4pt>[d]^-{\del_{n+1}}  \ar@{}[d]|-{..}\ar@<-4pt>[d]_-{\del_0}
&
\vdots \ar@<4pt>[d]^-{\del_{n+1}}  \ar@{}[d]|-{..}\ar@<-4pt>[d]_-{\del_0}
&\\
 Z_{n}^{0 }
    \ar@<4pt>[d]^-{\del_n}  \ar@{}[d]|-{..}\ar@<-4pt>[d]_-{\del_0}
        &\ar@<2pt>[l]^{d_1} \ar@<-2pt>[l]_{d_0} Z_{n}^{1}
        \ar@<4pt>[d]^-{\del_n}  \ar@{}[d]|-{..}\ar@<-4pt>[d]_-{\del_0}
             &\ar@<4pt>[l]^{d_2} \ar[l]|{d_1}\ar@<-4pt>[l]_{d_0} Z_{n}^{2}
             \ar@<4pt>[d]^-{\del_n} \ar@{}[d]|-{..}\ar@<-4pt>[d]_-{\del_0}
                   &\cdots
                      &Z_{n}^{p-1}\ar@<4pt>[d]^-{\del_n}  \ar@{}[d]|-{..}\ar@<-4pt>[d]_-{\del_0}
                           &\ar@<4pt>[l]^-{d_p}  \ar@{}[l]|-{:}\ar@<-5pt>[l]_-{d_0}Z_{n}^{p}
                           \ar@<4pt>[d]^-{\del_n}  \ar@{}[d]|-{..}\ar@<-4pt>[d]_-{\del_0}
                            &\cdots\\
\vdots \ar@<4pt>[d]^-{\del_2}  \ar[d]\ar@<-4pt>[d]_-{\del_0}
        &\vdots \ar@<4pt>[d]^-{\del_2}  \ar[d]\ar@<-4pt>[d]_-{\del_0}
             &\vdots \ar@<4pt>[d]^-{\del_2}  \ar[d]\ar@<-4pt>[d]_-{\del_0}
                  &\cdots
                     &\vdots \ar@<4pt>[d]^-{\del_2}  \ar[d]\ar@<-4pt>[d]_-{\del_0}
                           &\vdots \ar@<4pt>[d]^-{\del_2}  \ar[d]\ar@<-4pt>[d]_-{\del_0}
                             &\cdots\\
Z_{1}^{0 }
    \ar@<2pt>[d]^-{\del_1} \ar@<-2pt>[d]_-{\del_0}
        &\ar@<2pt>[l]^{d_1} \ar@<-2pt>[l]_{d_0}Z_{1}^{1}
        \ar@<2pt>[d]^-{\del_1} \ar@<-2pt>[d]_-{\del_0}
            &\ar@<4pt>[l]^{d_2} \ar[l]|{d_1}\ar@<-4pt>[l]_{d_0}Z_{1}^{2}
            \ar@<2pt>[d]^-{\del_1} \ar@<-2pt>[d]_-{\del_0}
                 &\cdots
                     &Z_{1}^{p-1}\ar@<2pt>[d]^-{\del_1} \ar@<-2pt>[d]_-{\del_0}
                           &\ar@<4pt>[l]^-{d_p}  \ar@{}[l]|-{:}\ar@<-5pt>[l]_-{d_0}Z_{1}^{p}
                           \ar@<2pt>[d]^-{\del_1} \ar@<-2pt>[d]_-{\del_0}
                            &\cdots\\
Z_{0}^{0 }
        &\ar@<2pt>[l]^{d_1} \ar@<-2pt>[l]_{d_0}Z_{0}^{1}
            &\ar@<4pt>[l]^{d_2} \ar[l]|{d_1}\ar@<-4pt>[l]_{d_0}Z_{0}^{2}
                 &\cdots
                     &Z_{0}^{p-1}
                           &\ar@<4pt>[l]^-{d_p}  \ar@{}[l]|-{:}\ar@<-5pt>[l]_-{d_0}Z_{0}^{p}
                           &\cdots
}$$
As in this diagram we shall reserve the notation $\del_i$
for the face operators of a column facial space and the notation
$d_i$ for the face operators of a row facial space. For any $k$,
$|Z^k_{\bullet}|_{m}$ (resp. $|Z_k^{\bullet}|^{m}$) is the
realization up to $m$ of the $k$th column (resp. $k$th row)
and $|Z^{\bullet}_{\bullet}|_{m}$ (resp.
$|Z_{\bullet}^{\bullet}|^{m}$) is the facial space obtained by 
realizing each column (resp. each row) up to $m$.\\

The construction of the map $G_n(X)\to  \|\Lambda_{\bullet}X\|_n$ relies heavily on the following result which is analogous to a theorem of A. Libman \cite{LibmanII}. As A. Libman has pointed out to the authors, this result can be derived from \cite{LibmanII} (private communication). For the convenience of the reader, we include, in an appendix, an independent proof of the particular case we need.

\begin{theorem}\label{Libman}
Consider a facial space $Z_{\bullet}^{-1 }$ and a facial resolution
$Z_{\bullet}^{\bullet} \stackrel{d_0}{\to} Z_{\bullet}^{-1}$ such
that each row $Z_{k}^{\bullet} \stackrel{d_0}{\to} Z_{k}^{-1}$
admits a contraction. Then, for any $n$, there exists a not
necessarily base-point preserving continuous map
$|Z_{\bullet}^{-1}|_n \to ||Z_{\bullet}^{\bullet}|^n|_n$ which is a
section up to free homotopy of the canonical map
$||Z_{\bullet}^{\bullet}|_n|^n\to |Z_{\bullet}^{-1}|_n$.
\end{theorem}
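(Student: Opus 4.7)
The strategy is an induction on $n$, constructing the section one skeleton at a time, with Proposition \ref{facialres} applied row-by-row as the main input. For each row $k$, the given contraction of $Z_k^{\bullet} \to Z_k^{-1}$ furnishes, for every $m \leq n$, a section $\sigma_m^k \colon Z_k^{-1} \to |Z_k^{\bullet}|^m$ of the canonical map together with an explicit homotopy $H_m^k$ between the inclusion $|Z_k^{\bullet}|^{m-1} \hookrightarrow |Z_k^{\bullet}|^m$ and $\sigma_m^k$ composed with the canonical map. Because the row contractions are given separately and are not assumed to be natural in the column index $k$, the family $\{\sigma_m^k\}_k$ does not commute with the column face operators on the nose; it only defines a homotopy-coherent map of column facial spaces, with the coherences supplied by the $H_m^k$.

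For the base case $n=0$ one has $|Z_\bullet^{-1}|_0 = Z_0^{-1}$ and $||Z_\bullet^{\bullet}|^0|_0 = Z_0^0$, and $\sigma_0^0 \colon Z_0^{-1} \to Z_0^0$ is itself a strict section of the row-zero augmentation. Inductively, suppose a free-homotopy section $f_{n-1} \colon |Z_\bullet^{-1}|_{n-1} \to ||Z_\bullet^{\bullet}|^{n-1}|_{n-1}$ has been constructed. Since $|Z_\bullet^{-1}|_n$ is obtained from $|Z_\bullet^{-1}|_{n-1}$ by attaching $Z_n^{-1} \times \Delta^n$ along $Z_n^{-1} \times \partial \Delta^n$, and similarly for the target, the natural candidate extension on the new cells is $\sigma_n^n \times \mathrm{id}_{\Delta^n}$ followed by the attaching map of $||Z_\bullet^{\bullet}|^n|_n$. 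The required boundary matching with $f_{n-1}$ amounts, face by face, to comparing the sections along the column face operators.

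The main obstacle is precisely this boundary matching: on the $i$-th face of $\partial \Delta^n$ one must reconcile $\sigma_n^n$ composed with a column face operator and the corresponding face of $\sigma_n^{n-1}$, a discrepancy that only vanishes up to the homotopies $H_m^k$ furnished by Proposition \ref{facialres}. Following the homotopy-coherent rectification techniques of Vogt \cite{Vogt73}, I will absorb these discrepancies by thickening each attaching simplex $\partial_i \Delta^n$ by an extra interval of homotopy parameter and interpolating along it via $H_n^{n-1}$ between the inclusion and the section; iterating over the faces and over successive skeleta yields the desired extension $f_n \colon |Z_\bullet^{-1}|_n \to ||Z_\bullet^{\bullet}|^n|_n$. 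This thickening generically displaces the basepoint through parts of the spaces $|Z_k^{\bullet}|^m$ where the contraction $s$ acts non-trivially on $\ast$, which is exactly why one cannot hope for a pointed map. Finally, that $f_n$ is a section up to free homotopy of the canonical map follows by post-composition: in each row $\sigma_m^k$ is a strict section, so the interpolating homotopies $H_m^k$ become free self-homotopies of $|Z_\bullet^{-1}|_n$ that can be contracted to the identity, giving the stated free-homotopy section property.
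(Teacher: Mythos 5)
Your proposal captures the Vogt-flavored spirit that the paper itself invokes, but there is a genuine gap at the central step, and it is exactly the step where the paper introduces the auxiliary $n$-facial spaces $J^n_{\bullet}$ and $I^n_{\bullet}$.

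You identify the obstruction correctly: on a boundary face of the new cell $Z_n^{-1}\times\Delta^n$ one must reconcile $L\partial_i\circ\sigma_n^n$ with $\sigma_n^{n-1}\circ\partial_i$, two maps $Z_n^{-1}\to L_{n-1}$ that differ because the row contractions $s_k$ are not assumed compatible with the column face operators $\partial_i$. But you then claim this discrepancy ``only vanishes up to the homotopies $H_m^k$ furnished by Proposition~\ref{facialres},'' and this is where the argument breaks. The homotopy $H$ in Proposition~\ref{facialres} relates the \emph{within-row} inclusion $|X_\bullet|_{n-1}\hookrightarrow|X_\bullet|_n$ to $\sigma_n$ post-composed with the canonical map; it takes place entirely inside a single row resolution and says nothing about the failure of $\sigma$ to commute with the \emph{column} operators $\partial_i$. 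You need a different kind of homotopy, one that interpolates between the two orderings $\partial_i s_k^{\,\bullet}$ and $s_{k-1}^{\,\bullet}\partial_i$; it is not supplied by Proposition~\ref{facialres}, nor do you construct it. Moreover, even granted such one-step homotopies, your ``thickening and interpolating face by face, iterating over skeleta'' would also need the higher coherences that make the interpolations along adjacent faces agree on their intersections; you gesture at Vogt's techniques but give no control over these compatibilities.

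The paper's proof solves exactly this problem, and it does so non-inductively. Using the contraction, it first identifies $L_k=|Z_k^{\bullet}|^n$ with the quotient $Z_k^n\times\Delta^n/\!\sim$ of Proposition~\ref{quotient}, and then builds a single facial map $\overline\psi\colon J^n_{\bullet}(Z^{-1})\to T^n_{\bullet}(L)$ out of the explicit formula
$\overline\psi_k(\partial_{i_1},\dots,\partial_{i_m},z,t_0,\dots,t_m)=[s_k^{\,n+1-m}\partial_{i_1}s_{k+1}\partial_{i_2}\cdots\partial_{i_m}s_{k+m}z,0,\dots,0,t_0,\dots,t_m]$.
The simplicial coordinates $t_j$ in $J^n_\bullet$ are precisely the interpolation parameters you were reaching for, and the quotient relations defining $J^n_\bullet$ and $L_k$ encode, once and for all, the coherence conditions across all faces and all levels $\leq n$, so no separate induction and no appeal to the homotopies of Proposition~\ref{facialres} is needed. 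The rectification is completed by the commuting triangle $\pi\circ\eta=\id$, $\overline\pi=\pi\circ\zeta$, and the fact that $\eta,\zeta,\pi,\overline\pi$ are degreewise homotopy equivalences, so that $|\overline\psi|$ becomes the desired free-homotopy section after realization. Until you replace the appeal to $H_m^k$ by a construction of the actual column-coherence data (this $J^n_\bullet/\overline\psi$ package, or an equivalent), your argument does not go through.
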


The second part of \thmref{thm:main} can be stated as follows.

\begin{theorem}\label{thm:hardway}
Let $X\in \Top$ be a simply connected space. For each $n \in \N$ the $n$th Ganea map $G_n(X) \to X$ factors up to (pointed) homotopy through the canonical map $\|\Lambda _{\bullet}X\|_n \to X$.
\end{theorem}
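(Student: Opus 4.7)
The plan is to apply \thmref{Libman} to the bifacial space $Z_\bullet^\bullet$ defined by
\[
Z_k^p \;=\; \mathcal{G}_k(\Lambda_p X) \;=\; \bigl(\Omega'(\Sigma\Omega)^{p+1}X\bigr)^{k},
\]
with column face operators (in the $k$-direction) given by multiplication in the topological monoid $\Omega'\Lambda_p X$, and row face operators (in the $p$-direction) obtained by applying $\Omega'$ componentwise to the face operators of $\Lambda_\bullet X$. Augment by $Z_k^{-1} = \mathcal{G}_k(X) = (\Omega'X)^k$ via $(\Omega'\ev_X)^k$. Each row $Z_k^\bullet \to Z_k^{-1}$ is the $k$-fold cartesian power of the facial resolution $\Omega'\Lambda_\bullet X \to \Omega'X$ of \propref{omegacontraction}, and hence admits as contraction the $k$-fold componentwise product of the contraction~$s$ constructed there. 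Then \thmref{Libman} produces, for each $n$, a (not necessarily pointed) continuous map
\[
\sigma_n\colon \bigl|\mathcal{G}_\bullet(X)\bigr|_n \;\longrightarrow\; \bigl\||Z_\bullet^\bullet|^n\bigr\|_n,
\]
which is a section, up to free homotopy, of the canonical comparison map $\bigl\||Z_\bullet^\bullet|_n\bigr\|^n \to |\mathcal{G}_\bullet(X)|_n$.

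Precomposing $\sigma_n$ with the natural map $G_n(X)\simeq B_n\Omega'X \to |\mathcal{G}_\bullet(X)|_n$ of \propref{cat} produces $G_n(X) \to \||Z_\bullet^\bullet|^n\|_n$. It then remains to produce a natural map $\||Z_\bullet^\bullet|^n\|_n \to \|\Lambda_\bullet X\|_n$ over $X$. For this, I use the variant of \propref{cat} alluded to in the remark after it, which provides natural maps $|\mathcal{G}_\bullet(\Lambda_p X)|_n \to B_n\Omega'\Lambda_p X$; composing with the $n$th Ganea map $B_n\Omega'\Lambda_p X \to \Lambda_p X$ (homotopically well defined since the suspension $\Lambda_p X$ is simply connected) yields a compatible family that assembles into a map of facial spaces $|Z_\bullet^\bullet|_n \to \Lambda_\bullet X$. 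Realizing this map in the remaining row direction, together with the Fubini-type comparison between $\||Z_\bullet^\bullet|^n\|_n$ and $\||Z_\bullet^\bullet|_n\|^n$ underlying \thmref{Libman} and \propref{pointedvsfree} to pass between free and pointed realizations, furnishes the desired map to $\|\Lambda_\bullet X\|_n$. The resulting composite $G_n(X) \to \|\Lambda_\bullet X\|_n$ lies over $X$ because every intermediate map is compatible with the relevant augmentations.

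The hypothesis that $X$ is simply connected enters in two places. First, it ensures that the Ganea maps $B_n\Omega'\Lambda_p X \to \Lambda_p X$ and the identification $|\mathcal{G}_\bullet(X)|_\infty \simeq X$ implicit in working over $X$ behave homotopically as expected at every stage of the construction. Second, it allows the \emph{free} homotopy character of the Libman section to be upgraded to \emph{pointed} homotopy; the obstruction to this upgrade is a class in $\pi_1$ of a function space built out of $X$ and vanishes in the simply connected case. The main obstacle will be this middle step: coherently assembling the partial realizations $|\mathcal{G}_\bullet(\Lambda_p X)|_n$ into a map of facial spaces over $\Lambda_\bullet X$ and checking that this assembly is compatible with the Fubini-type comparison and with the free-homotopy section produced by \thmref{Libman}; the pointed-versus-free reconciliation, controlled via \propref{pointedvsfree} and the simply-connected hypothesis, is the other delicate technical point.
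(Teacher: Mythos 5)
Your overall strategy is the same as the paper's: apply \thmref{Libman} to the bifacial space $Z_k^p={\mathcal G}_k(\Lambda_p X)$, use \propref{omegacontraction} to supply the row contractions, and precompose the resulting free-homotopy section with the natural map $B_n\Omega'X\to|{\mathcal G}_\bullet(X)|_n$ from \propref{cat}. Where you diverge — and where a genuine gap appears — is in your ``middle step'', the passage from $||{\mathcal G}_\bullet(\Lambda_\bullet X)|_n|^n$ to $\|\Lambda_\bullet X\|_n$.

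You propose to build, level by level, maps
$$|{\mathcal G}_\bullet(\Lambda_p X)|_n \longrightarrow B_n\Omega'\Lambda_p X \longrightarrow \Lambda_p X$$
and assemble them into a strict morphism of facial spaces. This fails for two reasons. First, the map $|{\mathcal G}_\bullet(Y)|_n \to B_n\Omega'Y$ is only alluded to in the unproved remark following \propref{cat}; the paper's proof of \thmref{thm:hardway} deliberately uses only \propref{cat} as stated. Second, and more importantly, the ``Ganea map'' $B_n\Omega'Y\to Y$ that you want to postcompose with is not a natural map of spaces. The natural Ganea map is $p_n\colon G_n(Y)\to Y$ from the fiber–cofiber construction, while the identification $B_n\Omega'Y\simeq G_n(Y)$ (equivalently, $B\Omega'Y\simeq Y$) is only a natural \emph{zig-zag} of weak equivalences, not a natural map. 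Your composite $B_n\Omega'\Lambda_p X\to\Lambda_p X$ is therefore well-defined only up to homotopy, and a level-wise family of maps defined up to homotopy does not assemble into a map of facial spaces without an additional rectification argument — which is exactly the type of coherence problem \thmref{Libman} is built to solve and should not be re-introduced ad hoc.

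The paper sidesteps this entirely: after applying the Libman section one pushes forward along the natural cofibration into the \emph{infinite} column realization, $||{\mathcal G}_\bullet(\Lambda_\bullet X)|_n|^n \to ||{\mathcal G}_\bullet(\Lambda_\bullet X)|_\infty|^n$, and then observes that the map $||{\mathcal G}_\bullet(\Lambda_\bullet X)|_\infty|^n \to |{\mathcal G}_\bullet(X)|_\infty$ is \emph{weakly equivalent} to $|\Lambda_\bullet X|_n\to X$, since $|{\mathcal G}_\bullet(Y)|_\infty\to B\Omega'Y\to Y$ is a natural zig-zag of weak equivalences for simply connected $Y$ by \propref{cat}. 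Being a zig-zag rather than a single map is harmless here, because one only needs a factorization up to homotopy, and \propref{pointedvsfree} then lets one pass from $|\Lambda_\bullet X|_n$ to $\|\Lambda_\bullet X\|_n$. If you want to keep the flavor of your middle step, replace the non-natural Ganea map by the natural cofibration $B_n\Omega'\Lambda_p X\cof B\Omega'\Lambda_p X$ and then invoke the zig-zag $B\Omega'\Lambda_p X\simeq\Lambda_p X$; but at that point you have effectively recovered the paper's argument via infinite realizations, modulo the still-unproved ``reversed'' variant of \propref{cat}. Your remarks about where simple connectivity is used (to identify $|{\mathcal G}_\bullet(X)|_\infty$ with $X$ and to upgrade free to pointed homotopy) are correct.
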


\begin{proof}
Consider the column facial space $Z_{\bullet}^{-1} = {\mathcal G}_{\bullet}(X)$ and the facial resolution $Z_{\bullet}^{-1} \leftarrow Z_{\bullet}^{\bullet} $ where $Z_{i}^{j} = {\mathcal G}_{i}(\Lambda_{j}X)$. Each row facial resolution $$Z_{i}^{-1} = 
{\mathcal G}_{i}(X) \leftarrow Z_{i}^{\bullet} = {\mathcal G}_{i}(\Lambda_{\bullet}X)$$ admits a contraction. Since  ${\mathcal G}_{0}(\Lambda_{\bullet}X) = \ast$, this is clear for $i = 0$. For $i > 0$,
${\mathcal G}_{i}(\Lambda_{\bullet}X) = (\Omega '\Lambda_{\bullet}X)^i$. Indeed, since, by \propref{omegacontraction}, the facial resolution
$\Omega 'X \leftarrow \Omega'\Lambda_{\bullet}X$ admits a contraction, its $i$th power also admits a contraction. 

For $n \in \N$ consider the commutative diagram
$$\xymatrix{
B_n\Omega 'X\ar[r]^{}_{} \ar[d]^{}_{} &
|{\mathcal G}_{\bullet}(X)|_n \ar[d] &
||{\mathcal G}_{\bullet}(\Lambda_{\bullet}X)|_n|^n \ar[l] \ar[d]
\\
B\Omega 'X\ar[r]^{}_{} &
|{\mathcal G}_{\bullet}(X)|_{\infty} &
||{\mathcal G}_{\bullet}(\Lambda_{\bullet}X)|_{\infty}|^n \ar[l]
} $$
in which the left-hand square is the natural square of \propref{cat}. Recall that the lower left horizontal map is a homotopy equivalence. Since $X$ is simply connected, $X$ is naturally weakly equivalent to $B\Omega 'X$ and hence to $|{\mathcal G}_{\bullet}(X)|_{\infty}$. It follows that the map $||{\mathcal G}_{\bullet}(\Lambda_{\bullet}X)|_{\infty}|^n \to |{\mathcal G}_{\bullet}(X)|_{\infty}$ is weakly equivalent to the map $|\Lambda_{\bullet}X|_n \to X$. Since this last map factors through the map $\|\Lambda_{\bullet}X\|_n \to X$ and since, by \thmref{Libman}, the upper right horizontal map of the diagram above admits a free homotopy section, we obtain a diagram
$$\xymatrix{
B_n\Omega 'X\ar[r]^{}_{} \ar[d]^{}_{}
& \|\Lambda_{\bullet}X\|_n\ar [d]^{}_{}
\\
B\Omega 'X\ar[r]^{f}_{}
& X
}$$
which is commutative up to free homotopy and in which $f$ is a (pointed) homotopy equivalence. Since the left hand vertical map is equivalent to the Ganea map $G_n(X) \to X$, there exists a diagram
$$\xymatrix{
G_n(X) \ar[r]^{}_{} \ar[d]^{}_{}
& \|\Lambda_{\bullet}X\|_n\ar [d]^{}_{}
\\
X\ar[r]^{g}_{}
& X
}$$
which is commutative up to free homotopy and in which $g$ is a (pointed) homotopy equivalence. This implies that the Ganea map $G_n(X) \to X$ factors up to free homotopy through the canonical map $\|\Lambda _{\bullet}X\|_n \to X$. Since $X$ is simply connected and $\|{\Lambda}_{\bullet}X\|_n$ is connected, the Ganea map $G_n(X) \to X$ also factors up to pointed homotopy through the canonical map $\|\Lambda _{\bullet}X\|_n \to X$.
\end{proof}
\section{Proof of \thmref{thm:main2}}
\begin{proof}
Recall the homotopy fiber sequence
$$
\xymatrix @1{\Omega X\ast \Omega X\ar[r]^-h&\Sigma\Omega X\ar[r]^-{d_0}&X
}$$
where $h$ is the Hopf map. This sequence is natural in $X$ and the space $G_2(X)$ is equivalent to the pushout of
$\xymatrix @1{\cC(\Omega X\ast \Omega X)&\Omega X\ast\Omega X\ar[l]\ar[r]&\Sigma\Omega X
}$, where $\cC(Y)$ denotes the (reduced) cone over a space~$Y$.
We use the following diagram
$$\xymatrix@=10pt{
(2)&\cC(\Omega X\ast \Omega X)&&
\cC(\Omega\Sigma\Omega X\ast \Omega\Sigma\Omega X)\ar[ll]_{d_0}&
\cC(\Omega\left(\Sigma\Omega\right)^2X\ast \Omega\left(\Sigma\Omega\right)^2X)
\ar@<2pt>[l]^-{d_1} \ar@<-2pt>[l]_-{d_0}&\\
(1)&\Omega X\ast \Omega X\ar[u]\ar[d]_h&&
\Omega\Sigma\Omega X\ast \Omega\Sigma\Omega X\ar[u]\ar[ll]_{d_0}\ar[d]&
\Omega\left(\Sigma\Omega\right)^2X\ast \Omega\left(\Sigma\Omega\right)^2X\ar[u]\ar[d]
\ar@<2pt>[l]^-{d_1} \ar@<-2pt>[l]_-{d_0}&\\
(0)&\Sigma\Omega X\ar[dd]_(.4){d_0}&&
\left(\Sigma\Omega\right)^2 X\ar[ll]_{d_0}\ar[dd]_(.4){d_0}&
\left(\Sigma\Omega\right)^3 X\ar[dd]_(.4){d_0}
\ar@<2pt>[l]^-{d_1} \ar@<-2pt>[l]_-{d_0}&\\
&&\ar@{--}[uuu]\ar@{--}[rrr]&&&\\
(-1)&X&&\Sigma\Omega X\ar[ll]_{d_0}&
\left(\Sigma\Omega\right)^2X
\ar@<2pt>[l]^-{d_1} \ar@<-2pt>[l]_-{d_0}&\\
%
}$$
We observe that
\begin{itemize}
\item the image of Line (-1) by $\Omega$ has a  contraction in the obvious sense;
\item Line (0) is the image of Line (-1) by $\Sigma\Omega$ therefore Line (0) admits a contraction;
\item the face operators of Line (1) are the maps $\Omega d_i\ast\Omega d_i$ with the face operators $d_i$ of Line (-1), thus Line (1) admits a contraction;
\item Line (2) admits a contraction induced by the previous one.
\end{itemize}
From the expression of the Hopf map $h\colon \Omega X\ast\Omega X\to \Sigma\Omega X$, $h([\alpha, t,\beta])=[\alpha^{-1}\beta,t]$, we observe that the map
$H\colon(\Omega X\ast\Omega X)\times [0,1]\to X$, defined by
$H([\alpha,t,\beta],s)=\alpha^{-1}\beta(st)$, induces a natural extension of $d_0\circ h$ to 
$\cC(\Omega X\ast \Omega X)$. Therefore, we can  complete the diagram by maps from Line (2) to Line (-1) which are compatible with face operators.\\
Denote by $\tilde{G}$ the homotopy colimit of the framed part of the diagram. We have a commutative square:
$$\xymatrix{
G_2(X)\ar[d]&\tilde{G}\ar[l]\ar[d]\\
X&\|\Lambda_\bullet X\|_1\ar[l]
}$$
\lemref{lem:petitlibman} provides a homotopy section of the map
$\tilde{G}\to G_2(X)$. Thus we obtain a map 
$$G_2(X)\to \|\Lambda_\bullet X\|_1$$
up to homotopy over $X$.
\end{proof}

\begin{lemma}\label{lem:petitlibman}
We consider the following diagram in $\Top$, satisfying $d_0\circ d_0=d_0\circ d_1$ and the obvious commutativity conditions.$$
\xymatrix{
&\ar@{--}[rrr]\ar@{--}[dddd]
&&&\ar@{--}[dddd]\\
A_{-1}&&
A_0\ar[ll]_(.4){d_0}&
A_1
\ar@<2pt>[l]^-{d_1} \ar@<-2pt>[l]_-{d_0}&\\
B_{-1}\ar[u]_{\alpha_{-1}}\ar[d]^{\beta_{-1}}&&
B_0\ar[ll]_(.4){d_0}\ar[u]_{\alpha_{0}}\ar[d]^{\beta_{0}}&
B_1\ar[u]_{\alpha_{1}}\ar[d]^{\beta_{1}}
\ar@<2pt>[l]^-{d_1} \ar@<-2pt>[l]_-{d_0}&\\
C_{-1}&&
C_0\ar[ll]_(.4){d_0}&
C_1
\ar@<2pt>[l]^-{d_1} \ar@<-2pt>[l]_-{d_0}&\\
&\ar@{--}[rrr]&&&
}$$
Let $\tilde{G}$ be the homotopy  colimit of the framed part and $G_{-1}$ be the homotopy colimit of the first column. 
We denote by  $\tilde{d}\colon \tilde{G}\to G_{-1}$ the map induced by $d_0$.
If the lines of the previous diagram admit contractions in the obvious sense, then the map $\tilde{d}$ has a (pointed) homotopy section.
\end{lemma}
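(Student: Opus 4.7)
The plan is to specialize \thmref{Libman} to the small case at hand, in which the column-indexing category is a span and the row truncation is at level $1$. Write $M_i$ for the (homotopy) pushout of $A_i \leftarrow B_i \to C_i$. Then $G_{-1} \simeq M_{-1}$ and, by interchanging the two directions of the homotopy colimit, $\tilde G \simeq \|M_\bullet\|_1$, with $\tilde d$ induced by the augmentation $d_0\colon M_0 \to M_{-1}$. Equivalently, $\tilde G$ is the pushout of $|A_\bullet|_1 \leftarrow |B_\bullet|_1 \to |C_\bullet|_1$ and $\tilde d$ is the pushout map of the three row canonical maps.

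The first step is to apply \propref{facialres} to each of the three rows. The contraction on $A_\bullet \to A_{-1}$ yields a strict section $\sigma^A \colon A_{-1} \to |A_\bullet|_1$ of the canonical map, with the explicit form $a \mapsto [s^2(a),0,1]$ under the identification of \propref{quotient}; analogous sections $\sigma^B, \sigma^C$ are obtained for the other two rows.

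Next I would paste these three row sections into a single map $\sigma \colon M_{-1} \to \|M_\bullet\|_1$. The ``obvious sense'' in which contractions exist in our application of this lemma, in the proof of \thmref{thm:main2}, is the natural one, since all contractions there arise functorially from \propref{omegacontraction}. The naturality clause of \propref{facialres} then makes the compatibility squares
$$\sigma^A \circ \alpha_{-1} = |\alpha_\bullet|_1 \circ \sigma^B, \qquad \sigma^C \circ \beta_{-1} = |\beta_\bullet|_1 \circ \sigma^B$$
commute strictly, so the three sections assemble through the pushout into the required map $G_{-1} \to \tilde G$. Since each $\sigma^A, \sigma^B, \sigma^C$ is a strict section of its row's canonical map, the composite $\tilde d \circ \sigma$ is the identity on each piece of the pushout $M_{-1}$, giving a strict (in particular pointed homotopy) section of $\tilde d$.

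The main obstacle, should one wish to treat the general case in which the row contractions are not natural with respect to the vertical maps, is exactly this pasting step: the two compatibility squares then commute only up to homotopies built from iterating the row contractions, forcing one to use a homotopy pushout model of $G_{-1}$ and to track the resulting higher coherences --- the combinatorial issue settled by the appendix proof of \thmref{Libman}.
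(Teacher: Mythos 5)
The proposal's central reduction is wrong: the compatibility squares
$$\sigma^A \circ \alpha_{-1} = |\alpha_\bullet|_1 \circ \sigma^B, \qquad \sigma^C \circ \beta_{-1} = |\beta_\bullet|_1 \circ \sigma^B$$
do not commute strictly, even in the application to \thmref{thm:main2}. You conflate naturality of the contractions in the variable $X$ (what \propref{omegacontraction} and the ``natural'' clause of \propref{facialres} give) with compatibility of the contractions across the vertical maps $\alpha$, $\beta$. These are unrelated. The vertical maps in the proof of \thmref{thm:main2} are instances of the Hopf map $h\colon \Omega Y\ast\Omega Y\to\Sigma\Omega Y$ and a cone extension of it; these are natural transformations between \emph{different} functors of $X$, and naturality of each row's contraction in $X$ says nothing about whether the contractions intertwine with $h$. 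Concretely, the row contractions are (essentially) $\Sigma\gamma$ on the $A$-row and $\gamma\ast\gamma$ on the $B$-row, and compatibility through $h$ would force $\gamma(\alpha^{-1}\beta) = \gamma(\alpha)^{-1}\gamma(\beta)$, i.e.\ that $\gamma_X\colon\Omega'X\to\Omega'\Sigma\Omega X$ be a monoid map --- which it is not.

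The paper's proof confronts exactly this: it computes $\|\alpha_\bullet\|_1\circ g(b)=[s_A\alpha_0 s_B(b),0,1]$ while $f\circ\alpha_{-1}(b)=[s_A\alpha_0 s_B(b),1,0]$; these agree in the $A_1$-component but differ in the $\Delta^1$-coordinates, so they are \emph{not} equal but are joined by the canonical straight-line path (which degenerates at the basepoint). That canonical homotopy, rather than a strict equality, is what allows pasting the three row sections into a section of the map between mapping cylinders. So the ``main obstacle'' you relegate to the general case is in fact present in the case at hand, and the crux of the paper's ad hoc argument is precisely to supply those correcting homotopies explicitly. As written, your argument skips the essential step.
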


\begin{proof}
This is a special case of a dual of a result of Libman in \cite{LibmanII}. It is not covered by the proof of the last section but this  situation is simple and we furnish an ad-hoc argument for it.

First we construct maps
$f\colon A_{-1}\to \|A_\bullet  \|_1$,
$g\colon B_{-1}\to  \|B_\bullet  \|_1$
and
$k\colon C_{-1}\to  \|C_\bullet \|_1$
such that
$ \|\alpha_\bullet \|_1\circ g\simeq f\circ \alpha_{-1}\text{ and }
k\circ \beta_{-1}\simeq  \|\beta_\bullet \|_1\circ g$.
With the same techniques as in \propref{quotient}, it is clear that $\|A_\bullet\|_1$ is homeomorphic to the quotient $A\rtimes \Delta^1$ by the relation $(a,t_0,t_1)\sim (sd_ia,0,1)$ if $t_i=0$.  So, we define $f$, $g$  and $k$ by
$$f(a)=[s_As_A(a),0,1], g(b)=[s_Bs_B(b),0,1]\text{ and }
k(c)=[s_Cs_C(c),0,1].$$
A computation gives:
\begin{eqnarray*}
 \|\alpha_\bullet \|_1\circ g(b)&=&[\alpha_1s_Bs_B(b),0,1]\\
&=& [s_Ad_0\alpha_1s_Bs_B(b),0,1]\\
&=&[s_A\alpha_0d_0s_Bs_B(b),0,1]\\
&=& [s_A\alpha_0s_B(b),0,1]\\
f\circ \alpha_1(b)&=&
[s_As_A\alpha_{-1}(b),0,1]\\
&=&
[s_As_Ad_0\alpha_0s_B(b),0,1]\\
&=&[s_Ad_1s_A\alpha_0s_B(b),0,1]\\
&=&[s_A\alpha_0s_B(b),1,0],
\end{eqnarray*}
the last equality coming from our construction of $ \|A_\bullet \|_1$.
These two points,
$ \|\alpha_\bullet \|_1\circ g(b)$ and $f\circ \alpha_1(b)$,
 are canonically joined by a path that reduces to a point if $b=*$. The same argument gives the similar result for $k$.  We observe now that these homotopies give a map between the two mapping cylinders which is a section up to pointed homotopy.
\end{proof}
\section{Open questions}

The main open question after these results concerns  the existence of maps over $X$ up to homotopy,
$G_n(X)\to \|\Lambda_\bullet X\|_{n-1}$ 
for any $n$. This question is related to the Lusternik-Schnirelman category (LS-category in short) $\cat X$ of a topological space $X$. Recall that $\cat X\leq n$ if and only if the Ganea fibration $G_n(X)\to X$ admits a section. 
The truncated resolutions bring a new homotopy invariant $\ell_{\Sigma\Omega}(X)$ defined in a similar way as follows:
$$\ell_{\Sigma\Omega}(X)\leq n \text{ if the map } \|\Lambda_\bullet X\|_{n-1}\to X \text{ admits a homotopical section.}$$
From \thmref{thm:main} and  \thmref{thm:main2}, we know that this new invariant coincides with the LS-category for spaces of LS-category less than or equal to 2 and satisfies
$$\cat X \leq \ell_{\Sigma\Omega}(X)\leq 1+ \cat X.$$
Grants to the result in dimension 2, $ \ell_{\Sigma\Omega}(X)$  does not coincide with the cone length. We conjecture its equality with the LS-category and the existence of maps $G_n(X)\to \|\Lambda_\bullet X\|_{n-1}$ over $X$ up to homotopy.

\medskip
We now extend our study by considering a cotriple $T$.
 Recall that a cotriple $(T,\eta,\varepsilon)$ on
$\Top$ is a functor
$T :
\Top\rightarrow
\Top$ together with two natural transformations
$\eta_X\colon T(X)\rightarrow X$ and $\varepsilon_X\colon T(X)\rightarrow T^2(X)$ such that:\\
\centerline{$\varepsilon_{F(X)}\circ \varepsilon_X=F(\varepsilon_X)\circ \varepsilon_X$
and
$\eta_{T(X)}\circ \varepsilon_X=T(\eta_X)\circ \varepsilon_X=\id_{T(X)}$.}

It is well known that $T$ gives a simplicial space $\Lambda^T_\bullet X$ defined by $\Lambda^T_n X =T^{n+1}(X)$. From it, we deduce a facial space and the truncated realizations $\|\Lambda^T_\bullet X\|_n$. If $T$ satisfies $T(*)\sim *$, takes its values in suspensions and $\Omega'(\Lambda^T_\bullet X)$ admits a contraction, a careful reading of the proofs in this work shows that we get the same conclusions as in \thmref{thm:main} and  \thmref{thm:main2}  with the Ganea spaces $G_n(X)$ and the realizations $\|\Lambda^T_\bullet X\|_i$.

\medskip
We could also use a construction of the Ganea spaces adapted to the cotriple $T$ as follows.

\begin{definition}\label{defi:TGanea}
Let $T$ be a cotriple and $X$ be a space, the \emph{$n$th fibration of Ganea associated
to
$T$ and $X$} is defined inductively by:

-- $p_1^T\colon G_1^T(X)\rightarrow X$ is the associated fibration to $\eta_X\colon T(X)
\rightarrow X$,

-- if $p^T_n\colon G_n^T(X)\rightarrow X$ is defined, we denote by $F_n^T(X)$ its homotopy
fiber and build a map ${p'}_{n+1}^T\colon G_n^T(X)\cup {\cC}(T(F_n^T(X))\rightarrow X$ as
$p^T_n$ on $G_n^T(X)$ and sending the cone $\cC(T(F_n^T(X))$ on the base point. The
fibration
$p_{n+1}^T$ is the associated fibration to ${p'}_{n+1}^T$.
\end{definition}

The results of this paper and the questions above have their analog in this setting. 
New approximations of spaces arise from the truncated realizations $\|\Lambda^T_\bullet X\|_i$ and from the adapted fiber-cofiber constructions. One natural problem is to look for a comparison between them. These questions  can also be stated in terms of LS-category. For instance, does the Stover resolution (see \cite{Sto90}) of a space by wedges of spheres give the $s$-category defined in \cite{Sc-Ta99b}?

\section{Appendix: Proof of \thmref{Libman}}

The purpose of this appendix is to give a proof of \thmref{Libman}.
This proof is contained in the \subsecref{proof} below and uses the
constructions and notation of the following subsection.

\subsection{$n$-facial spaces and $n$-rectifiable maps}

\noindent Let $n\geq 0$ be an integer. A facial space $X_{\bullet}$
is a \emph{$n$-facial space} if, for any $k\geq n+1$, $X_k=*$.
To any facial space $Y_{\bullet}$, we can associate an $n$-facial
space $T^n_{\bullet}(Y)$ by setting $T^n_{k}(Y)=Y_k$ if $k\leq n$
and $T^n_{k}(Y)=*$ if $k\geq n+1$. Obviously, for any $k\leq n$, we
have
$|T^n_{\bullet}(Y)|_k=|Y_{\bullet}|_k$.\\

 \noindent Let $Y_{\bullet}$ be a facial
space with face operators $\del_i:Y_k\to Y_{k-1}$. We associate
to $Y_{\bullet}$ two $n$-facial spaces $I^n_{\bullet}(Y)$ and
$J^n_{\bullet}(Y)$ and morphisms $\eta,\zeta,\pi,\overline{\pi}$
which induce homotopy equivalences between the realizations up
to $n$ and such that the following diagram is commutative:
$$\xymatrix{
T^n_{\bullet}(Y)
\ar[r]^{\eta}\ar[rd]_{\id}&I^n_{\bullet}(Y)\ar[d]_{{\pi}} &
J^n_{\bullet}(Y) \ar[l]_{\zeta} \ar[ld]^{\overline{\pi}}\\
&T^n_{\bullet}(Y).& }$$ For any integer $k\geq 1$ we denote by
$\del_{\underline{k}}$ the set $\{\del_0,...,\del_k\}$ of the $(k+1)$
face operators $\del_i:Y_k\to Y_{k-1}$ and, for any integer $l\geq
k$, we set $\del_{\underline{k}\,:\,\underline{l}}:=
\del_{\underline{k}}\times \del_{\underline{k+1}}\times...\times \del_{\underline{l}}$. \\

\paragraph{\bf The $n$-facial space $J^n_{\bullet}(Y)$.}
For $0\leq k\leq n$, consider the space:
$$\left(Y_k\times \Delta^0\right) \coprod \coprod_{m=1}^{n-k}\left(\del_{\underline{k+1}\,:\,\underline{k+m}}\times Y_{k+m}\times \Delta^m\right).$$
An element of this space
will be written $(\del_{i_1},...,\del_{i_m},y,t_0,...,t_m)$ with the convention
$(\del_{i_1},...,\del_{i_m},y,t_0,...,t_m)=(y,1)$ if $m=0$. Set
$$J^n_{k}(Y):=\left(\left(Y_k\times \Delta^0\right) \coprod \coprod_{m=1}^{n-k}\left(\del_{\underline{k+1}\,:\,\underline{k+m}}\times Y_{k+m}\times \Delta^m\right)\right)/\sim$$
where the relations are given by
$$
(\del_{i_1},...,\del_{i_m},y,t_0,...,t_m)\sim (\del_{i_1},...,\del_{i_{m-1}},\del_{i_m}y,t_0,...,t_{m-1}), \quad \mbox{if } t_m=0,$$
and
$$
(\del_{i_1},...,\del_{i_p},\del_{i_{p+1}},...\del_{i_m},y,t_0,...,t_m)\sim
(\del_{i_1},...,\del_{i_{p+1}-1},\del_{i_{p}},...\del_{i_m},y,t_0,...,t_m),
$$
if $t_p=0$ and $i_p<i_{p+1}$.

Together with the face operators $J{\del}_i:J^n_{k}(Y)\to
J^n_{k-1}(Y)$, $0\leq i\leq k$, defined by
$$J{\del}_i(\del_{i_1},...,\del_{i_m},y,t_0,...,t_m)=(\del_i,\del_{i_1},...,\del_{i_m},y,0,t_0,...,t_m),$$
$J^n_{\bullet}(Y)$ is a $n$-facial space.\\

\paragraph{\bf The $n$-facial space $I^n_{\bullet}(Y)$.} For $0\leq k\leq n$, we consider now the space:
$$\left(Y_k\times \Delta^1\right) \coprod \coprod_{m=1}^{n-k}\left(\del_{\underline{k+1}\,:\,\underline{k+m}}\times Y_{k+m}\times \Delta^{m+1}\right).$$
We write $(\del_{i_1},...,\del_{i_m},y,t_0,...,t_{m+1})$ the
elements of that space with the convention
$(\del_{i_1},...,\del_{i_m},y,t_0,...,t_{m+1})=(y,t_0,t_1)$ if
$m=0$. The space $I^n_k(Y)$ is defined to be the quotient
$$I^n_k(Y):=\left(\left(Y_k\times \Delta^1\right) \coprod \coprod_{m=1}^{n-k}\left(\del_{\underline{k+1}\,:\,\underline{k+m}}
\times Y_{k+m}\times \Delta^{m+1}\right)\right)/\sim$$ with respect to the
relations
$$
(\del_{i_1},...,\del_{i_m},y,t_0,...,t_{m+1})\sim (\del_{i_1},...,\del_{i_{m-1}},\del_{i_m}y,t_0,...,t_m), \quad \mbox{if } t_{m+1}=0,$$
and
$$
(\del_{i_1},...,\del_{i_p},\del_{i_{p+1}},...\del_{i_m},y,t_0,...,t_{m+1})\sim
(\del_{i_1},...,\del_{i_{p+1}-1},\del_{i_{p}},...\del_{i_m},y,t_0,...,t_{m+1}),
$$
if $t_{p+1}=0$ and $i_p<i_{p+1}$.

Together with the face operators $I{\del}_i:I^n_k(Y)\to
I^n_{k-1}(Y)$, $0\leq i\leq k$, defined by
$$I{\del}_i(\del_{i_1},...,\del_{i_m},y,t_0,t_1,...,t_{m+1})=(\del_i,\del_{i_1},...,\del_{i_m},y,t_0,0,t_1,...,t_{m+1}),$$
$I^n_{\bullet}(Y)$ is a $n$-facial space.\\

\paragraph{\bf The morphisms $\eta,\zeta,\pi,\overline{\pi}$}
The facial maps $\eta: T^n_{\bullet}(Y)\to I^n_{\bullet}(Y)$,
$\zeta: J^n_{\bullet}(Y)\to I^n_{\bullet}(Y)$, $\pi:I^n_{\bullet}(Y)
\to T^n_{\bullet}(Y)$ and $\overline{\pi}:J^n_{\bullet}(Y)\to
T^n_{\bullet}(Y)$ are respectively defined (for $k\leq n$) by:
$$\begin{array}{l}
\eta_k(y)=(y,1,0),\\[.2cm]
\zeta_k(\del_{i_1},...,\del_{i_m},y,t_0,...,t_m)=(\del_{i_1},...,\del_{i_m},y,0,t_0,...,t_m),\\[.2cm]
\pi_k(\del_{i_1},...,\del_{i_m},y,t_0,...,t_{m+1})=\del_{i_1}\cdots\del_{i_m}y \quad \mbox{and} \quad \pi_k(y,t_0,t_1)=y,\\[.2cm]
\overline{\pi}_k=\pi_k\circ \zeta_k.
\end{array}$$

We have $\pi_k\circ \eta_k=\id$ so that the following diagram is commutative:
$$\xymatrix{
T^n_{\bullet}(Y)
\ar[r]^{\eta}\ar[rd]_{\id}&I^n_{\bullet}(Y)\ar[d]_{{\pi}} &
J^n_{\bullet}(Y) \ar[l]_{\zeta} \ar[ld]^{\overline{\pi}}\\
&T^n_{\bullet}(Y).& }$$ In order to see that these morphisms induce
homotopy equivalences between the realizations up to $n$, it
suffices to see that, for any $k$, $0\leq k\leq n$, the maps
$\eta_k,\zeta_k,\pi_k,\overline{\pi}_k$ are homotopy equivalences.
Thanks to the commutativity of the diagram above we just have to
check it for the maps $\pi_k$ and $\overline{\pi}_k$. These two maps
admit a section: we have already seen that $\pi_k\circ \eta_k=\id$
and, on the other hand, the map $\varphi_k: T^n_{k}(Y) \to
J^n_{k}(Y)$ given by $\varphi_k(y)=(y,1)$ (which does not commute
with the face operators) satisfies $\overline{\pi}_k\circ
\varphi_k=\id$. The conclusion follows then from the fact that the
two homotopies
$$\begin{array}{rcl}
H_k:I^n_k(Y)\times I&\to &I^n_k(Y)\\
((\del_{i_1},...,\del_{i_m},y,t_0,...,t_{m+1}),u)&\mapsto&
(\del_{i_1},...,\del_{i_m},y,u+(1-u)t_0),\\
&&\hskip 2.7cm (1-u)t_1,..,(1-u)t_{m+1})
\\[.2cm]
\overline{H}_k:J^n_{k}(Y)\times I&\to &J^n_{k}(Y)\\
((\del_{i_1},...,\del_{i_m},y,t_0,...,t_{m}),u)&\mapsto&
(\del_{i_1},...,\del_{i_m},y,u+(1-u)t_0,\\
&&\hskip 2.7cm (1-u)t_1,..,(1-u)t_{m})
\end{array}$$
satisfy $H_k(-,0)=\id$, $H_k(-,1)=\eta_k\circ \pi_k$ and $\overline{H}_k(-,0)=\id$,
$\overline{H}_k(-,1)=\varphi_k\circ \overline{\pi}_k$.\\

\paragraph{\bf $n$-rectifiable map.}
We write $\varphi:T^n_{\bullet}(Y)\dasharrow J^n_{\bullet}(Y)$ to
denote the collection of maps $\varphi_k: T^n_k(Y) \to J^n_k(Y)$
given by $\varphi_k(y)=(y,1)$. Recall that $\varphi$ is not a
morphism of facial spaces since it does not satisfy the usual rules of commutation with the face
operators. In the same way we write $\psi:Y_{\bullet}\dasharrow
Z_{\bullet}$ for a collection of maps $\psi_k:Y_{k}\dasharrow Z_{k}$
which do not satisfy the usual rules 
 of commutation  with the face operators and we say that $\psi$
is an \emph{$n$-rectifiable map} if there exists a morphism of
facial spaces $\overline{\psi}:J^n_{\bullet}(Y)\to T^n_{\bullet}(Z)$
such that $\overline{\psi}_k\circ\varphi_k=\psi_k$ for any $k\leq
n$. So, an $n$-rectifiable map $\psi:Y_{\bullet}\dasharrow
Z_{\bullet}$ induces a map between the realizations up to $n$ of the
facial spaces $Y_{\bullet}$ and $Z_{\bullet}$.\\

\subsection{Proof of \thmref{Libman}}\label{proof}

Let $Z_{\bullet}^{\bullet} \stackrel{d_0}{\to} Z_{\bullet}^{-1}$ be
a facial resolution of a facial space $Z_{\bullet}^{-1}$ such that
each row $Z_{k}^{\bullet} \stackrel{d_0}{\to} Z_{k}^{-1}$ admits a
contraction and let $n\geq 0$. We first note that the realization of
$Z_{\bullet}^{\bullet}$ up to $p$ along the rows and up to $n$ along
the columns leads to two canonical maps:
$$||Z_{\bullet}^{\bullet}|^p|_ n\to |Z_{\bullet}^{-1}|_n \qquad ||Z_{\bullet}^{\bullet}|_n|^ p\to |Z_{\bullet}^{-1}|_n.$$
Induction on $p$ and standard colimit arguments show that these two
maps are equal (up to homeomorphism). Here we  prove that
$||Z_{\bullet}^{\bullet}|^p|_ n\to |Z_{\bullet}^{-1}|_n$ admits a
homotopy section.\\

 \noindent For any $k$, we denote by $s_k$ the contraction of the $k$th row
$$\xymatrix{
Z_{k}^{-1 } & \ar[l]_{d_0} Z_{k}^{0 } &\ar@<2pt>[l]^{d_1}
\ar@<-2pt>[l]_{d_0}Z_{k}^{1} &\ar@<4pt>[l]^{d_2}
\ar[l]|{d_1}\ar@<-4pt>[l]_{d_0}X_{k}^{2} &\cdots &Z_{k}^{n-1}
&\ar@<4pt>[l]^-{d_n}  \ar@{}[l]|-{:}\ar@<-4pt>[l]_-{d_0}Z_{k}^{n}
}$$ and, in order to simplify the notation we will write $L_k$ for
the realization up to $n$ of this facial space. That is,
$L_k=|Z_k^{\bullet}|^n$. Recall, from \propref{quotient}, that the
existence of the contraction permits the following description of
$L_k$:
 $$L_k=Z_k^n\times \Delta^n /\sim$$
 where the relation is given by
 $$(z,t_0,...,t_i,...,t_n)\sim (s_kd_iz,0,t_0,...,\hat{t}_i,...,t_n)\quad \mbox{if  } t_i=0.$$
With respect to this description, the canonical map $L_k\to
Z_k^{-1}$ is given by\\
$[z,t_0,...,t_i,...,t_n]\mapsto d_0^{n+1}z$ and is denoted by $\varepsilon_n$ (without reference to $k$).\\

\noindent Realizing all the lines, we obtain a facial map:
$$\xymatrix{
\vdots \ar@<4pt>[d]^-{\del_{n+1}}  \ar@{}[d]|-{..}\ar@<-4pt>[d]_-{\del_0}&
\vdots \ar@<4pt>[d]^-{\del_{n+1}}  \ar@{}[d]|-{..}\ar@<-4pt>[d]_-{\del_0}
\\
 Z_n^{-1}\ar@<4pt>[d]^-{\del_n}
\ar@{}[d]|-{..}\ar@<-4pt>[d]_-{\del_0} & L_n\ar@<4pt>[d]^-{\del_n}
\ar@{}[d]|-{..}\ar@<-4pt>[d]_-{\del_0}  \ar[l]_{\varepsilon_n}
\\
\vdots \ar@<4pt>[d]^-{\del_2} \ar[d] \ar@<-4pt>[d]_-{\del_0}
&\vdots \ar@<4pt>[d]^-{\del_2}  \ar[d]\ar@<-4pt>[d]_-{\del_0} \\
Z_1^{-1}\ar@<2pt>[d]^-{\del_1} \ar@<-2pt>[d]_-{\del_0}
&L_1\ar@<2pt>[d]^-{\del_1} \ar@<-2pt>[d]_-{\del_0}  \ar[l]_{\varepsilon_n}\\
Z_0^{-1} &L_0 \ar[l]_{\varepsilon_n} }$$

\noindent The face operators $\del_i:L_k\to L_{k-1}$ are given by
$\del_i[z,t_0,...,t_n]=[\del_i z,t_0,...,t_n]$. Our aim is thus to
see that the map obtained after realization (and always denoted by
$\varepsilon_n$)
$$\xymatrix{
|Z_{\bullet}^{-1}|_n& |L_{\bullet}|_n   \ar[l]_{\varepsilon_n} }$$
admits a section up to homotopy.

For each $k$, the map $\varepsilon_n:L_k\to Z_k^{-1}$ admits a
(strict) section given by $z\mapsto [s_k^{n+1}z,0,0,...,0,1]$ which
we denote by $\psi_k$. The collection $\psi$ of these maps does not
define a facial map since the contraction $s_k$ are not required to
commute with the face operators $\del_i$ of the columns. The key is that
$\psi:Z_{\bullet}^{-1}\dasharrow L_{\bullet}$ is an $n$-rectifiable
map. We can indeed consider, for each $k\leq n$, the (well-defined)
map $\overline{\psi}_k:J_k^n({Z^{-1}})\to L_k$ given by:
$$\overline{\psi}_k(\del_{i_1},...,\del_{i_m},z,t_0,...,t_m)=
[s_k^{n+1-m}\del_{i_1}s_{k+1}\del_{i_2}s_{k+2}...\del_{i_m}s_{k+m}
z,0,...,0,t_0,...,t_m].$$ Straightforward calculation shows that the
maps $\overline{\psi}_k$ commute with the face operators $\del_i$ so
that the collection $\overline{\psi}$ is a facial map. This morphism
also satisfies $\overline{\psi}_k\circ \varphi_k=\psi_k$ for any
$k\leq n$ (which implies that $\psi$ is an $n$-rectifiable map) and
$\varepsilon_n\overline{\psi}=\overline{\pi}$. We have hence the
following commutative diagram:

$$\xymatrix{
T^n_{\bullet}(Z^{-1})
\ar[r]^{\eta}\ar[rd]_{\id}&I^n_{\bullet}(Z^{-1})\ar[d]_{{\pi}} &
J^n_{\bullet}(Z^{-1}) \ar[l]_{\zeta} \ar[ld]_{\overline{\pi}}\ar[r]^{\overline{\psi}}&T^n_{\bullet}(L)\ar[lld]^{\varepsilon_n}\\
&T^n_{\bullet}(Z^{-1}).& }$$

Since the morphisms $\eta$, $\zeta$, $\pi$ and $\overline{\pi}$
induce homotopy equivalence between the realizations up to $n$, we
get the following situation after realization:
$$\xymatrix{
|T^n_{\bullet}(Z^{-1})|_n
\ar[r]^{\sim}\ar[rd]_{\id}&|I^n_{\bullet}(Z^{-1})|_n\ar[d]_{{\sim}} &
|J^n_{\bullet}(Z^{-1})|_n \ar[l]_{\sim} \ar[ld]_{\sim}\ar[r]^{\overline{\psi}}&|T^n_{\bullet}(L)|_n\ar[lld]^{\varepsilon_n}\\
&|T^n_{\bullet}(Z^{-1})|_n.& }$$ Since
$|T^n_{\bullet}(Z^{-1})|_n=|Z_{\bullet}^{-1}|_n$ and
$|T^n_{\bullet}(L)|_n=|L_{\bullet}|_n$, we obtain that the map
$|L_{\bullet}|_n\to|Z_{\bullet}^{-1}|_n $ admits a homotopy section.
\hfill$\Box$


\bibliographystyle{abbrv}
\bibliography{newbiblio}

\end{document}